\documentclass[11pt, reqno]{amsart}    

\usepackage{amsmath,amssymb,latexsym,mathtools}
\usepackage{graphicx,xcolor,mathrsfs}
\usepackage{cite}
\usepackage[utf8]{inputenc}
\usepackage[english]{babel}
\usepackage[font={scriptsize}]{caption}

\DeclareMathOperator{\supp}{supp}

\newtheorem{theorem}{Theorem}[section]
\newtheorem{lemma}[theorem]{Lemma}
\newtheorem{proposition}[theorem]{Proposition}
\newtheorem{remark}[theorem]{Remark}

\newtheorem{corollary}[theorem]{Corollary}
\newtheorem*{main-theorem}{Main Theorem}
\newtheorem*{remark*}{Remark}

\numberwithin{equation}{section}

\usepackage{breqn}
\usepackage{enumitem}
\usepackage{mathrsfs}

\usepackage[colorlinks=true]{hyperref}
\hypersetup{urlcolor=blue, citecolor=red, linkcolor=blue}

\usepackage{scalerel,stackengine}
\stackMath
\newcommand\widecheck[1]{%
\savestack{\tmpbox}{\stretchto{%
  \scaleto{%
    \scalerel*[\widthof{\ensuremath{#1}}]{\kern-.6pt\bigwedge\kern-.6pt}%
    {\rule[-\textheight/2]{1ex}{\textheight}}
  }{\textheight}%
}{0.5ex}}%
\stackon[1pt]{#1}{\scalebox{-1}{\tmpbox}}%
}

\newlist{abbrv}{itemize}{1}
\setlist[abbrv,1]{label=,labelwidth=1in,align=parleft,itemsep=0.1\baselineskip,leftmargin=!}

\let\olddefinition\definition
\renewcommand{\definition}{\olddefinition\normalfont}
\let\oldremark\remark
\renewcommand{\remark}{\oldremark\normalfont}

\newcommand{\R}{\mathbb{R}}
\newcommand{\N}{\mathbb{N}}

\newcommand{\bo}{\bigg(}
\newcommand{\bc}{\bigg)}
\newcommand{\Bok}{\bigg[}
\newcommand{\bok}{\Big[}
\newcommand{\Bck}{\bigg]}
\newcommand{\bck}{\Big]}
\newcommand{\qqquad}{\quad\qquad}

\renewcommand{\k}{\kappa}
\newcommand{\x}{\xi}
\newcommand{\vr}{\varphi_r}

\newcommand{\vn}{\varphi_k}
\newcommand{\pn}{\psi_k}

\newcommand{\un}{u_k^1}
\newcommand{\um}{u_k^2}
\newcommand{\vun}{v_k^1}
\newcommand{\vum}{v_k^2}

\newcommand{\m}{\mu}

\newcommand{\norm}[2]{\|#1\|_{{#2}}}

\newcommand{\e}{\mathcal{E}}
\renewcommand{\l}{\mathcal{L}}
\newcommand{\n}{\mathcal{N}}
\newcommand{\q}{\mathcal{Q}}
\renewcommand{\O}{\mathcal{O}}

\renewcommand{\b}{\beta}

\newcommand{\horm}[1]{\|#1\|_{H^\frac{s}{2}}}
\newcommand{\cop}[1]{\langle#1\rangle}

\DeclareMathOperator*{\argmax}{arg\,max}

\title[Solitary waves for dispersive equations]{Solitary waves for weakly dispersive equations with inhomogeneous nonlinearities}

\author[M{\ae}hlen]{Ola I.H. Maehlen}
\address{Department of Mathematical Sciences, Norwegian University of Science and Technology, 7491 Trondheim, Norway}
\email{ola.mahlen@ntnu.no}

\thanks{The author acknowledges the support from grant no. 250070 from the Research Council of Norway.}

\keywords{solitary waves; weak dispersion; capillary Whitham equation; water waves; concentration compactness}
\subjclass[2010]{35A01; 35A15; 35Q53; 76B03; 76B15}

\begin{document}

\maketitle

\begin{abstract}
We show existence of solitary-wave solutions to the equation
\begin{equation*}
u_t+ (Lu - n(u))_x = 0\,,
\end{equation*}
for weak assumptions on the dispersion $L$ and the nonlinearity $n$.
The symbol $m$ of the Fourier multiplier $L$ is allowed to be of low positive order ($s > 0$), while $n$ need only be locally Lipschitz and asymptotically homogeneous at zero. We shall discover such solutions in Sobolev spaces contained in $H^{1+s}$.
\end{abstract}

\section{Introduction}
A great deal of model equations for the evolution of water waves in one spacial dimension can be compactly written as
\begin{equation}\label{originalstrong}
u_t + (Lu - n(u))_x = 0\,,
\end{equation}
where the dispersion $L$ is a Fourier multiplier in space with real-valued symmetric symbol $m$, that is, 
\[\widehat{Lu}(\x) = m(\x)\hat{u}(\x), \]
and $n$ is a local nonlinear term. Solutions of \eqref{originalstrong} tend to enjoy a variety of qualitative properties of water, see \cite{MR3060183}, but our focus will be on the existence of \textit{solitary waves}. Traveling at constant velocity \(\nu\), these solutions take the form $(x,t)\mapsto u(x-\nu t)$, where $u(y)\to0$ as $|y|\to\infty$. For such solutions \eqref{originalstrong} means
\begin{equation}\label{original}
-\nu u + Lu - n(u) = 0\,,
\end{equation}
in light of the assumption that $u$ vanish at infinity. 

A common approach to prove solitary waves in equations of the form \eqref{original} is Lion's concentration-compactness method introduced in \cite{MR778970}. Weinstein used this in 1987 to prove existence and orbital stability in the case of a monomial nonlinearity and a symbol of order \(s \geq 1\) \cite{MR886343}. The limit \(s = 1\) is not only superficial: In \cite{MR1455330} the authors study an equation corresponding to \(s=1\), and that method was later put in a more general framework in \cite{MR1647189}, again for \(s \geq 1\). Zeng \cite{MR1954506} later used a different energy functional (and different conserved quantity) to relax some of the conditions, but still for \(s \geq 1\). 

These works led a number of different authors to consider the case when \(s < 1\): in \cite{MR3360393} and \cite{MR3485840} the authors treat equations with positive-order Fourier operators (\(s > 0\)) — the case of homogeneous and inhomogeneous symbols respectively – and in both cases with homogeneous nonlinearities; whereas in \cite{MR2979975} smoothing operators (\(s < 0\)) with mildly inhomogeneous nonlinearities are allowed.  The method for positive-order operator is indeed based upon Weinstein's paper \cite{MR886343}, whereas the method for negative-order operators is different, and more closely related to works on the Euler equations and other systems with dispersion of very weak type \cite{MR2847283}.  A main difference between the works \cite{MR3360393, MR3485840} and \cite{MR2979975} is the requirement that the waves in the latter should be small. This is related to scalings/homogeneity of the nonlinearity, and an essential part of the method of proof in \cite{MR2979975}. A later work, related to the investigations for positive \(s\), is \cite{MR3060817}, in which the authors look at \eqref{originalstrong} when the nonlinearity is polynomial, cubic or higher, and the symbol \(m\) grows at least as \(|\xi|^{\frac{1}{2}}\) at infinity. This growth may be slightly lowered: in the case of a quadratic pure-power nonlinearity and a homogeneous symbol \(m\) (the fractional KdV equation), the optimal assumption in terms of growth is \(m(\xi) = |\xi|^{p}\), \(p>\frac{1}{3}\) \cite{MR3070568}; below this value one does not have solitary waves for the (homogeneous) fKdV equation \cite{MR3188389}. This coincides with our assumption on \(s\) below; for the assumption on \(s'\), see our remarks in Section~\ref{subsec:symbol m}.

Our goal has been twofold. First, to combine ideas from \cite{MR3485840} and \cite{MR2979975} to allow for more inhomogeneous nonlinearities in the theory for lower-order (\(s > 0\)) symbols; and, second, to improve upon the required assumptions on both the linear and nonlinear terms by a slightly different method of proof. The last point is made visible mostly in that the theory for low-order \(s\) is carried out in corresponding low-order Sobolev spaces (below the \(L^\infty\) embedding), for which we use a cut-off of the nonlinearity $n$ which is different from the `small ball' used in \cite{MR2979975}. (Our solutions will eventually be somewhat more regular, but the near-minimizers we work with might not exhibit the same regularity). In effect, we are able to reduce the assumptions on \eqref{original} to the following.

\subsection{The assumptions and the main theorem }\label{assumptions}
Throughout the paper, we will assume the following:
\begin{enumerate}
    \item [(A)]  The nonlinearity $n\colon \R\to\R$ is locally Lipschitz, and decomposes into $n = n_p + n_r$, where $n_p$ is homogeneous of one of the two forms:\\[-8pt]
    \begin{enumerate}[label=(\alph*)]
         \item[(A1)]  $x\mapsto c|x|^{1+p}$ and $c\neq 0$,\\[-10pt]
          \item[(A2)]  $x\mapsto cx|x|^{p}$ and $c>0$,\\[-8pt]
    \end{enumerate}
    for a real number $p > 0$, while the remainder term satisfies $n_r(x)=\mathcal{O}(|x|^{1+r})$, as $x\to0$, for some $r> p$.\\
    \item[(B)]  The symbol $m\colon\R\to\R$ is even and satisfies the growth bounds
\begin{equation*}
\begin{cases}
 m(\xi)-m(0)\simeq|\xi|^{s'}, & \text{for } |\x|<1,\\
 m(\xi)-m(0)\simeq|\xi|^{s}, & \text{for } |\x|>1,
 \end{cases}
\end{equation*}
with $s'> p/2$ and $s> p/(2+p)$. We also require \(\xi\mapsto m(\xi)/\cop{\x}^{s}\) to be uniformly continuous on \(\R\).
\end{enumerate}

We will discuss these assumptions in detail below. Given them, we will prove the following existence result.

\begin{theorem}\label{main}
There exist $\mu_*>0$ so that
for every $\mu\in(0,\mu_*)$, there is a solution $u\in H^{1+s}$ of \eqref{original}, with wave speed $\nu\in\R$, satisfying\\[-10pt]
\begin{enumerate}[label=(\roman*)]
    \item $\norm{u}{H^{1+s}}^2\lesssim\norm{u}{2}^2 = 2\mu$,\\[-4pt]
     \item 
        $m(0)-\nu\simeq\m^{\b}$, with $\b= \frac{s'p}{2s'-p}$,
\end{enumerate}
where the implicit constants in $(i)$ and $(ii)$ are independent of $\mu\in(0,\mu_*)$.
\end{theorem}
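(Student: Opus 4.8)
The plan is to realize $u$ as a translate of a minimizer for the constrained variational problem
\[
I_\mu \;=\; \inf\Bigl\{\,\e(v)\;:\; v\in H^{s/2},\ \norm{v}{2}^2=2\mu\,\Bigr\},
\qquad
\e(v)\;=\;\k(v)-\int_\R\widetilde N(v)\,dx ,
\]
where $\k(v):=\tfrac12\inr{(L-m(0))v}{v}$ is, by (B), a nonnegative quadratic form with $\norm{v}{2}^2+2\k(v)\simeq\horm{v}^2$ (so that $H^{s/2}$ is the natural energy space), and $\widetilde N$ is a primitive of a truncation $\widetilde n$ of $n$ which coincides with $n$ on a fixed ball $\{|x|\le\d\}$ and is globally Lipschitz. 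The point of passing to $\widetilde n$ is to be able to work below the $L^\infty$--embedding: the bound $s>p/(2+p)$ is precisely the condition under which $H^{s/2}\hookrightarrow L^{2+p}$, so $\e$ is well defined and $C^1$, and---the bound being strict---this embedding is locally compact, which we shall need below. A minimizer $u$ then solves the Euler--Lagrange equation $Lu-\widetilde n(u)=\nu u$, i.e.\ \eqref{original}, where $\nu\in\R$ is $m(0)$ plus the Lagrange multiplier of the mass constraint; pairing with $u$ will give $\nu<m(0)$, and once $\norm{u}{\infty}<\d$ the truncation may be undone.

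The exponent $\b$ comes out of a scaling computation: evaluating $\e$ on the long-wave ansatz $v=A\varphi(\cdot/\ell)$ with $A^2\ell\simeq\mu$ and optimizing in $\ell$ (the optimum is $\ell\simeq\mu^{-p/(2s'-p)}\to\infty$ as $\mu\to0$) yields $I_\mu\lesssim-\mu^{\gamma}$, where $\gamma:=\tfrac{(2+p)s'-p}{2s'-p}$ satisfies $\gamma-1=\tfrac{s'p}{2s'-p}=\b$. The matching lower bound rests on a Gagliardo--Nirenberg-type inequality which, in the long-wave regime governing the infimum, reads
\[
\int_\R|v|^{2+p}\,dx\;\lesssim\;\norm{v}{2}^{\,2+p-\frac{p}{s'}}\,\k(v)^{\frac{p}{2s'}} ,
\]
the exponent $\tfrac{p}{2s'}$ on $\k$ being $<1$ exactly because $s'>p/2$; inserting this into $\e$ on the constraint (with the genuinely high-frequency contributions handled separately through the cut-off) and optimizing over $\k(v)\ge0$ gives $\e(v)\gtrsim-C\mu^\gamma$. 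Hence $\e$ is bounded below, $I_\mu\simeq-\mu^\gamma$ with constants uniform in $\mu\in(0,\mu_*)$, and---since $\gamma>1$---the strict sub-additivity $I_{\mu_1+\mu_2}<I_{\mu_1}+I_{\mu_2}$ holds for all $\mu_1,\mu_2>0$ with $\mu_1+\mu_2<\mu_*$.

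I would then run Lions' concentration-compactness dichotomy on a minimizing sequence $(u_k)$, which the lower bound keeps bounded in $H^{s/2}$. Vanishing is excluded because $I_\mu<0$ forces $\sup_{y\in\R}\int_{y-1}^{y+1}u_k^2$ to stay bounded below, and dichotomy is excluded by the strict sub-additivity, splitting $u_k$ with a smooth partition of unity and estimating the cross-terms in $\k$ by the uniform continuity of $\x\mapsto m(\x)/\cop{\x}^{s}$ from (B). So, after translating, the compactness alternative holds: no $L^2$--mass escapes, and with the local compactness of $H^{s/2}\hookrightarrow L^{2+p}$ this gives $u_k\to u$ strongly in $L^{2+p}$ and weakly in $H^{s/2}$; weak lower semicontinuity of $\k$ and $\int_\R\widetilde N(u_k)\to\int_\R\widetilde N(u)$ then force $\e(u)\le I_\mu$ with $\norm{u}{2}^2=2\mu$, so $u$ is a minimizer, and in fact $u_k\to u$ in $H^{s/2}$.

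Finally the quantitative claims and the regularity. The lower-bound argument also gives $\k(u)\lesssim\mu^\gamma$, hence $\int_\R\widetilde N(u)\simeq\mu^\gamma$ and $\horm{u}^2\lesssim\mu$. Pairing the Euler--Lagrange equation with $u$ gives $2(m(0)-\nu)\mu=\int_\R\widetilde n(u)u\,dx-2\k(u)$; since $\widetilde N$ is asymptotically homogeneous of degree $2+p$ one has $\widetilde n(u)u=(2+p)\widetilde N(u)+\mathcal{O}(|u|^{2+r})$ on $\{|u|\le\d\}$, and $\int_\R|u|^{2+r}\,dx\le\norm{u}{\infty}^{r-p}\int_\R|u|^{2+p}\,dx=o(\mu^\gamma)$ since $r>p$, so the right-hand side equals $p\,\k(u)-(2+p)I_\mu+o(\mu^\gamma)\simeq\mu^\gamma$; therefore $\nu<m(0)$ and $m(0)-\nu\simeq\mu^{\gamma-1}=\mu^\b$. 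A bootstrap on $(L-\nu)u=\widetilde n(u)$---since $m(0)-\nu>0$ the symbol $m(\x)-\nu$ is bounded below and $\gtrsim\cop{\x}^{s}$ on $\{|\x|>1\}$, so $(L-\nu)^{-1}$ gains $s$ derivatives at high frequency, low frequencies being controlled by $\norm{u}{2}$, and the last gain lands at $H^{1+s}$ and no further since $\widetilde n$ is merely Lipschitz---upgrades $u\in H^{s/2}$ to $u\in H^{1+s}$ with $\norm{u}{H^{1+s}}^2\lesssim\horm{u}^2\lesssim\mu=\tfrac12\norm{u}{2}^2$; in particular $\norm{u}{\infty}^2\lesssim\mu$, so taking $\mu_*$ small enough makes $\norm{u}{\infty}<\d$, whence $\widetilde n(u)=n(u)$ and $u$ solves \eqref{original}. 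The step I expect to be the main obstacle is the sharp two-sided estimate $I_\mu\simeq-\mu^\gamma$: proving the Gagliardo--Nirenberg inequality in a form finely adapted to the inhomogeneous form $\k$ (with the low-frequency order $s'$ fixing the exponent), using the cut-off to rule out the high-frequency configurations that a crude bound would leave uncontrolled, and extracting the corresponding tight a priori amplitude bound for near-minimizers---since the strict sub-additivity, the exclusion of dichotomy, the value of $\b$, and all the a priori bounds rest on it.
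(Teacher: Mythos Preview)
Your overall plan is the paper's: truncate $n$ to make it globally Lipschitz, shift the symbol so that $m(0)=0$, set up the constrained minimization in $H^{s/2}$, obtain the upper bound on $I_\mu$ by a long-wave ansatz, run concentration--compactness, and bootstrap for regularity. Two steps, however, do not go through as written.

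First, strict subadditivity does \emph{not} follow from $I_\mu\simeq -\mu^{1+\b}$ alone (your $\gamma=1+\b$): two-sided bounds permit $-I_\mu$ to oscillate between $c_1\mu^{1+\b}$ and $c_2\mu^{1+\b}$ without $\mu\mapsto I_\mu$ being subadditive. The paper instead proves strict \emph{subhomogeneity} $I_{t\mu}<tI_\mu$ for $t>1$ by evaluating $\e(\sqrt{t}\,u)$ on a near-minimizer $u$ and exploiting the exact scaling $\n_p(\sqrt{t}\,u)=t^{1+p/2}\n_p(u)$; for this one must show that the remainder obeys $\n_r(u)=o(\mu^{1+\b})$ uniformly over near-minimizers.

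Second, and this is the same missing ingredient, your route to that remainder estimate via $\int|u|^{2+r}\le\norm{u}{\infty}^{r-p}\int|u|^{2+p}$ is circular when $s\le 1$: at this stage the near-minimizer $u$ is only known to lie in $H^{s/2}$, which does not embed in $L^\infty$, while the bootstrap to $H^{1+s}$ that would deliver $\norm{u}{\infty}\lesssim\sqrt\mu$ itself relies on $m(0)-\nu>0$, which you derive from the very computation that uses $\norm{u}{\infty}$. The paper breaks this circle by splitting $u=u_1+u_2$ at frequency $|\xi|=1$: the low-frequency piece satisfies $\norm{u_1}{\infty}\le\norm{\widehat{u_1}}{1}\lesssim\sqrt\mu$ directly from the constraint, giving $\norm{u_1}{2+r}^{2+r}\lesssim\mu^{1+\b+(r-p)/2}$, while the high-frequency piece obeys $\norm{u_2}{2+p}^{2+p}\lesssim\l(u)^{1+p/2}\lesssim\mu^{(1+p/2)(1+\b)}$ by Sobolev embedding; both are $o(\mu^{1+\b})$, and together they control $\n_r(u)$. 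This frequency split is the device that resolves the obstacle you correctly flag at the end of your proposal.
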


An interesting special case of Theorem~\ref{main} is the case of the capillary-gravity Whitham equation with strong surface tension, for which \(p=1\) and the symbol is 
\[
m(\xi) = \textstyle \left( (1+ T \xi^2) \frac{\tanh(\xi)}{\xi} \right)^{\frac{1}{2}}, \qquad T \geq \frac{1}{3},
\]
which corresponds to \(s = \frac{1}{2}\) and $s'=2$. Modelled on the water wave problem with surface tension, the capillary-gravity Whitham equation is known to admit generalized solitary waves in the case \(T < \frac{1}{3}\) (weak surface tension) \cite{doi:10.1111/sapm.12288}, and decaying solitary waves for \(T > 0\) (both weak and strong surface tension) \cite{MR3485840},  as well as periodic steady waves, including rippled solutions in the case of weak surface tension \cite{EJMR18}. In the case \(T < \frac{1}{3}\) the solitary waves have wave speeds \(\nu\) smaller than \(m(0)\) (called subcritical), whereas the generalized waves exhibit supercritical wave speeds \(\nu > m(0)\); for strong surface tension we are only aware of sub-critical solutions.  As we also prove the existence of sub-critical solutions, in the case of strong surface tension \(T \geq \frac{1}{3}\), there currently seems to lack super-critical truly solitary waves in the capillary-gravity Whitham equation. The same waves have also not been found for the capillary-gravity Euler equations (although we have not found a source actually stating this), but a proof of general non-existence is lacking. What has been shown is that there are no small-amplitude, exponentially decaying, even, supercritical solitary-wave solutions of the Euler equations in the slightly weak case when \(T\) is close to, but less than, \(\frac{1}{3}\) \cite{MR1702734}.

On a related note, it might be worth noticing that Theorem~\ref{main} is also an existence result for solitary waves tending to a general value $c$, not necessarily zero, at infinity. For if $\tilde{n}(x)=  n(c+x)-n'(c)x-n(c)$ satisfies the assumptions, then there is a solitary-wave solution $u$, with velocity $\nu$, of the equation $u_t + (Lu-\tilde{n}(u))_x=0$, and thus, $u+c$ is a traveling wave solution of \eqref{original} with velocity $\nu-n'(c)$.

\subsection{The method}\label{the method} In this subsection, the framework used to prove Theorem \ref{main} will be introduced. In particular, we develop a constrained minimization problem whose solutions satisfy \eqref{original}, and in fact, it is exactly solutions of this minimization problem that we shall prove the existence of. For this purpose, we will be working with two `extra' assumptions on \eqref{original}, namely
\begin{enumerate}
    \item [(C\textsubscript{1})]  $n$ is globally Lipschitz continuous,
    \item[(C\textsubscript{2})] $m(0)=0$.
\end{enumerate}
While these auxiliary assumptions (especially the first) excludes many instances of \eqref{originalstrong} where we would like to prove the existence of solitary wave solutions, it turns out that proving our main theorem for this smaller class implies the result in the more general setting, as we now demonstrate.

\begin{lemma}\label{equivalenceOfAssumptions}
If Theorem \ref{main} holds true under the assumptions (A), (B), (C\textsubscript{1}) and (C\textsubscript{2}), then it also holds true when only (A) and (B) are satisfied.
\end{lemma}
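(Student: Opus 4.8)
The plan is to reduce the general case to the restricted one by two independent modifications of the data $(m,n)$ — a vertical shift of the symbol and a truncation of the nonlinearity — and then to check that a solution produced under (C$_1$)–(C$_2$) is in fact a solution of the original equation.

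First I would dispense with (C$_2$). Given $m$ satisfying (B), set $\tilde m(\xi) = m(\xi) - m(0)$, so that $\tilde m$ satisfies (B) with the \emph{same} exponents $s, s'$ and with $\tilde m(0) = 0$; the uniform continuity of $\xi\mapsto \tilde m(\xi)/\cop{\x}^s$ is unaffected since we only subtracted a constant and $1/\cop\x^s$ is bounded. If $\tilde L$ denotes the Fourier multiplier with symbol $\tilde m$, then $\tilde L u = Lu - m(0)u$, so \eqref{original} for $(L,n)$ with speed $\nu$ is literally the same equation as $-\tilde\nu u + \tilde L u - n(u) = 0$ with $\tilde\nu = \nu - m(0)$. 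Moreover conclusion (ii) of Theorem~\ref{main} for the shifted problem reads $0 - \tilde\nu = -\tilde\nu \simeq \m^\b$, i.e. $m(0) - \nu \simeq \m^\b$, which is exactly conclusion (ii) in the original normalization; conclusion (i) only involves $u$ and is untouched. Thus it suffices to treat the case $m(0)=0$.

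Next I would remove (C$_1$). Here the point is that the solutions supplied by the theorem are \emph{small}: by (i) we have $\norm{u}{H^{1+s}}^2 \lesssim \m$, and since $1+s > \tfrac12$ the space $H^{1+s}$ embeds in $L^\infty$, so $\norm{u}{\infty} \lesssim \m^{1/2}$, which can be made as small as we like by shrinking $\mu_*$. So I would fix a radius $\delta_0 > 0$, let $\chi\in C^\infty_c(\R)$ be a cutoff equal to $1$ on $[-\delta_0,\delta_0]$, and replace $n$ by $\bar n := \chi\cdot n$. Since $n$ is locally Lipschitz, $\bar n$ agrees with $n$ near $0$ and is globally Lipschitz (it is Lipschitz on the compact support of $\chi$ and constant — zero — outside), so (C$_1$) holds for $\bar n$. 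The decomposition in (A) is a statement about the behaviour of $n$ near $0$ only: writing $\bar n = \bar n_p + \bar n_r$ with $\bar n_p := n_p$ (kept as the genuine homogeneous function, \emph{not} truncated) and $\bar n_r := \bar n - n_p = (\chi - 1) n_p + \chi\, n_r$, one checks that $\bar n_r(x) = \O(|x|^{1+r})$ as $x\to 0$: the term $\chi\, n_r$ is $\O(|x|^{1+r})$ because $n_r$ is, and $(\chi-1)n_p$ vanishes identically in a neighbourhood of $0$, hence is $\O(|x|^{N})$ for every $N$. So $\bar n$ satisfies (A) with the same $p$ and $r$. (Strictly, $n_p$ itself need not be globally Lipschitz if $0<p<1$, so if one wants $\bar n_p$ also globally Lipschitz one truncates it too; since only the near-zero behaviour of the pieces matters for (A), and the truncation does not change anything on $[-\delta_0,\delta_0]$, this is harmless — but the cleanest route is to truncate the whole of $n$ as above and only decompose abstractly.)

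Applying Theorem~\ref{main} under (A), (B), (C$_1$), (C$_2$) to the pair $(\tilde m,\bar n)$ yields, for each $\mu\in(0,\mu_*)$, a solution $u\in H^{1+s}$ of $-\tilde\nu u + \tilde L u - \bar n(u) = 0$ with $\norm{u}{H^{1+s}}^2 \lesssim \m$ and $-\tilde\nu\simeq\m^\b$. Shrinking $\mu_*$ if necessary so that the embedding bound forces $\norm{u}{\infty}\le \delta_0$ for all such $u$, we get $\bar n(u(y)) = n(u(y))$ pointwise, so $u$ solves $-\tilde\nu u + \tilde L u - n(u)=0$, i.e. $-\nu u + Lu - n(u)=0$ with $\nu = \tilde\nu + m(0)$; and the two conclusions (i), (ii) transfer as in the first paragraph. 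The main obstacle — really the only thing requiring care — is the bookkeeping in the second step: one must make sure (a) that $\mu_*$ can be chosen uniformly, i.e. that the a~priori bound $\norm{u}{H^{1+s}}^2\lesssim\m$ from Theorem~\ref{main} holds with an implicit constant independent of the particular near-minimizer, which is asserted in the statement, so that the single threshold $\delta_0$ works for every $\mu<\mu_*$; and (b) that the decomposition $\bar n = \bar n_p + \bar n_r$ genuinely satisfies (A), which as noted above reduces to the elementary fact that $(\chi-1)n_p$ is flat at the origin. Everything else is a direct substitution.
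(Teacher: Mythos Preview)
Your proposal is correct and follows essentially the same route as the paper: shift the symbol by $m(0)$ to enforce (C\textsubscript{2}), truncate $n$ to enforce (C\textsubscript{1}), apply the restricted theorem, and then use the $H^{1+s}\hookrightarrow L^\infty$ bound together with a shrunken $\mu_*$ to identify $\bar n(u)$ with $n(u)$. The only cosmetic difference is the truncation itself---the paper simply sets $\tilde n(x)=n(\pm1)$ for $\pm x>1$ rather than multiplying by a smooth cutoff---which spares the decomposition bookkeeping you carry out for $\bar n_r$, but the logic is identical.
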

\begin{proof}
Assume $n$ and $m$ satisfy (A) and (B). Define 
\begin{align*}
    \tilde n (x) &= \begin{cases}
     n(x),\quad |x|\leq1,\\
     n(\pm 1), \quad \pm x > 1,
    \end{cases}
    & \tilde{m} (\xi)&=m(\xi)-m(0),
\end{align*}
and notice that $\tilde n$ and $\tilde m$ satisfy (A), (B), (C\textsubscript{1}) and (C\textsubscript{2}). By assumption, Theorem \ref{main} now holds for the modified equation
\begin{align*}
    -\tilde \nu u + \tilde L u - \tilde n (u)=0,
\end{align*}
where $\tilde L$ is the Fourier multiplier whose symbol is $\tilde m$. Thus there is a $\tilde{\m}_*>0$ so that for each $\m\in(0,\tilde\m_*)$ we have a solution $u$ with velocity $\tilde \nu$ satisfying 
\begin{align*}
    \norm{u}{H^{1+s}}^2&\lesssim\mu,\\
     -\tilde{\nu} &\simeq \m ^\b,
\end{align*}
where we omitted $\tilde m(0)=0$ from the second expression. As $H^{1+s}\hookrightarrow L^\infty$, we can pick $\m_*\in (0,\tilde\m_*)$ so that $\norm{u}{\infty}\leq 1$ for all $\mu\in(0,\m_*)$. For such solutions $u$, we have $\tilde n(u) = n(u)$, and setting $\nu= \tilde{\nu}-m(0)$ we see that
\begin{align*}
    0&= -\tilde \nu u + \tilde L u - \tilde n (u),\\
    &= -\nu u + (\tilde L +  m(0) )u -  n (u),\\
    &= -\nu u +  Lu -  n (u).
\end{align*}
Thus, for  $\mu<\mu_*$ the solutions provided by Theorem \ref{main} for the modified equation are solutions of the original equation, but with a shifted velocity $\nu$ satisfying 
\begin{align*}
    m(0)-\nu\simeq\m^\b.
\end{align*}
\end{proof}

We now construct the minimization problem mentioned above, whose well-posedness is assured when the assumption (C\textsubscript{1}) is added to (A) and (B).
We will work in the Sobolev space $H^{\frac{s}{2}}$ of measurable functions $f \colon \R\to\R$ with finite Sobolev norm 
\begin{align*}
    \horm{f} = \norm{\cop{\cdot}^{\frac{s}{2}}\hat{f}}{2},
\end{align*}
where we use the Japanese bracket  \(\cop{\x}= \left( 1+\xi^2 \right)^{1/2}\). Our main tools shall be the functionals  $\q,\l,\n \colon H^{\frac{s}{2}}\to\R$, defined by 
\begin{align*}
    \q(u)&= \frac{1}{2}\int_\R u^2\,dx, \\
    \l(u) &=  \frac{1}{2}\int_\R m(\x)|\hat{u}|^2\,d\x,\\
    \n(u)= \n_p(u)+\n_r(u)&= \int_\R N_p(u)\,dx + \int_\R N_r(u)\,dx,
\end{align*} where $N_p(x)= \int_0^xn_p\,dt$, and $N_r(x)= \int_0^xn_r\,dt$. We will prove the above functionals to be Fréchet differentiable with \(H^{\frac{s}{2}}\)-derivatives 
\[
\q'(u)=u, \qquad \l'(u)=Lu, \qquad\text{and}\qquad \n'(u)=n(u).
\]
Consider now the constraint minimization problem
\begin{align}\label{problem}
    I_\mu =  \inf_{u\in U_{\m}}\e(u)\,,
\end{align}
 where $\e= \l-\n$ and
 \begin{align}\label{ConstrainSet}
 U_\mu = \{u\in H^{\frac{s}{2}} \colon \q(u)=\mu\}, 
 \end{align}
and where we restrict $\mu\in(0,\mu_*)$, for some fixed upper bound $\mu_*$ that we shall require to be sufficiently small. Our strategy shall be to find minimizers of \eqref{problem}; a minimizer $u$ must for some Lagrange multiplier $\nu\in\R$ satisfy 
 \[
 0=-\nu\q'(u)+\e'(u) = -\nu u+Lu-n(u), 
 \]
 thus solving \eqref{original}.  Note that, although our solutions are `discovered' in $H^{\frac{s}{2}}$, we additionally prove they lie in the more regular space $H^{1+s}$ (or, in an even more regular space, see Prop. \ref{furtherRegularity}). Had we been working on a compact domain, then any ``uniformly regular" minimizing sequence of \eqref{problem}, would admit a converging subsequence, implying the existence of a minimizer. As $\R$ is not compact, we instead use Lion's concentration--compactness theorem (see Section~\ref{preliminariesfunctionalsection}). Informally, any bounded sequence $(\rho_k)\subset L^{1}$ admits a subsequence (again indexed with $k$) that will, as \(k \to \infty\), either
 \begin{itemize}
    \itemsep -0.7em 
     \item[--] \emph{vanish} (the mass spreads out),\\
     \item[--] \emph{dichotomize}  (the mass splits in two parts that separate), or\\
     \item[--] \emph{concentrate} (the mass remains uniformly concentrated in space).
 \end{itemize} 
We will show that for a `concentrated' minimizing sequence, we can pick a converging subsequence. Thus, the existence of a minimizer of \eqref{problem} follows if we can for minimizing sequences rule out the possibility of vanishing and dichotomy. To achieve this, we use a ``long-wave ansatz" to find a low enough upper bound for $I_\mu$ that will allows us to compare the size of $\mu$, $\l$ and $\n$ on `near minimizers'. This size comparison will directly exclude vanishing and also imply that $\mu\mapsto I_\mu$ is subadditive for small $\mu>0$, which excludes dichotomy. The paper concludes with some regularity estimates for our solutions (see Prop. \ref{furtherRegularity}).

We end this section with some discussion regarding the main assumptions (A) and (B).

\subsection{A technical look at the assumptions (A) and (B)}
In this subsection, we discuss our main assumptions on the the pair $n$ and $m$; we mention what role the different parts play and whether some could be weakened. This discussion is easier to follow after a read through.

\subsubsection{The nonlinearity \(n\)} 
The continuity of $n$ is needed for $\n$ to be Fréchet differentiable. The stronger local Lipschitz continuity is used to obtain the estimate $\norm{u}{H^{1+s}}^2\lesssim\mu$ for our solutions in Prop.~\ref{regularityofsolutions}; this important estimate gives us Lemma \ref{equivalenceOfAssumptions} which is what we use to guarantee the well-posedness of \eqref{problem} in the case $s\leq1$. Still, there are two alternative ways of proving solitary waves when we assume $n$ to be merely continuous:\\[-10pt]

\begin{itemize}
\item[(i)] If $s>1$, we have $H^{\frac{s}{2}} \hookrightarrow BC$, and so one could use Prop.~\ref{complexbound} (specifically equation \eqref{hormbound}) in place of Prop.~\ref{regularityofsolutions} to attain Lemma \ref{equivalenceOfAssumptions}.\\[-8pt]

\item[(ii)] Alternatively, if $| n_r(x) |\lesssim |x|^{1+p}$ for $|x|>1$, all steps in this paper (apart from Prop. \ref{regularityofsolutions}) go through, granted we include the restriction $\horm{u}<R$ to our minimization problem for some arbitrary constant $R>0$, which only plays a role in proving Prop.~\ref{complexbound}.\\[-10pt] 
\end{itemize}
We choose to assume local Lipschitz continuity of \(n\) to avoid these other conditions, and to provide a somewhat different technique in comparison to earlier proofs.

Finally, the reason for excluding the case $n_p(x)=cx|x|^p$, $c<0$, is the same as in \cite{MR3485840} and \cite{MR2979975}. Our method breaks down at the first step in that regime, as we cannot hope to obtain the low upper bound for $I_\mu$ in Prop.~\ref{bounds}, because $-\n_p(u)>0$ for all $u\neq 0$. 

\subsubsection{The symbol $m$}\label{subsec:symbol m}
The upper bound of the growth at zero and the corresponding inequality $s'>p/2$ are needed to find a satisfactorily low upper bound for $I_\mu$ by a long-wave ansatz (see Prop.~\ref{bounds}), while the lower bound is necessary for Prop.~\ref{complexbound}, which is crucial for the remainder term $n_r$ to be negligible for sufficiently small $\mu$. 

As for the growth bounds when $|\xi|>1$, the lower bound is chosen to control the $H^{\frac{s}{2}}$-norm by $\q$ and $\l$, which together with $s>p/(2+p)$ gives control of the $L^{2+p}$-norm by Sobolev embedding. This is used in the proof of Prop. \ref{complexbound} and in \eqref{congestionfromgaglierdo} to exclude vanishing. 

The upper growth bound is instead needed when excluding dichotomy: Indeed, if $m(\cdot)-m(0)$ was bounded by $\cop{\cdot}^{\tilde{s}}$, $\tilde{s}>s$, we would need to work in $H^{\tilde{s}/2}$ (for $\e(u)$ to be well defined). Then equation \eqref{hormbound}, which bounds the $H^{\frac{s}{2}}$-norm, would still be the best regularity estimate on a minimizing sequence, but Lemma \ref{zous} (now, for operators $B_r\colon H^{\tilde{s}/2}\to H^{-\tilde{s}/2}$), would require a bound on the stronger $H^{\tilde{s}/2}$-norm to be of any use when proving Prop.~\ref{spezial}. 

Finally, the uniform continuity of $\xi\mapsto m(\xi)/\cop{\xi}^s$ is necessary for excluding dichotomy. It assures that $L$ is not `too' non-local, as described in Lemma~\ref{zous}. Note that a sufficient estimate for our regularity constraint is $|m'(\xi)|\lesssim\cop{\xi}^s$, as it implies that $\xi\mapsto m(\xi)/\cop{\xi}^s$ is globally Lipschitz.

\section{Preliminaries}\label{preliminariesfunctionalsection}
In this section, we presents bounds and regularity estimates for the functionals $\q,\l,\n,\e$ introduced in subsection \ref{the method}. Throughout section 2-7, we assume (only) that $n$ and $m$ satisfies the assumptions (A), (B), (C\textsubscript{1}) and (C\textsubscript{2}), introduced in subsection \ref{assumptions} and \ref{the method}. In light of Lemma \ref{equivalenceOfAssumptions}, proving Theorem \ref{main} in this case, implies the validity of the theorem when either (C\textsubscript{1}) or (C\textsubscript{2}) fails. 
\begin{proposition}\label{basicbounds}
For $u\neq 0$, we have
\begin{align*}
    (i)&\,0<\l(u)\lesssim \horm{u}^2, &  (iii)&\,|\n_p(u)|\lesssim\norm{u}{2+p}^{2+p},\\
       (ii)&\,|\n(u)|\lesssim \q(u), &
    (iv)&\, |\n_r(u+v)|\lesssim\norm{u}{2+r}^{2+r} + \norm{v}{2+p}^{2+p}.
    \end{align*}
\end{proposition}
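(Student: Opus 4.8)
The plan is to establish the four bounds in Proposition~\ref{basicbounds} by combining the growth assumptions (A)--(B) with standard Sobolev embeddings and the fundamental theorem of calculus. These are all ``soft'' estimates; no concentration-compactness is involved, so the main task is bookkeeping with the homogeneity exponents $p$ and $r$.

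For part $(i)$, I would write $\l(u) = \frac{1}{2}\int m(\xi)|\hat u|^2\,d\xi$ and use that, under (B) with (C\textsubscript{2}), $0 < m(\xi) \lesssim \cop{\xi}^s$ for $\xi\neq 0$ (the lower bound near $0$ and at $\infty$ forces positivity, while $\xi\mapsto m(\xi)/\cop{\xi}^s$ being uniformly continuous and matching $|\xi|^s$ at infinity gives the upper bound). Then $0 < \l(u) \le C\int\cop{\xi}^s|\hat u|^2\,d\xi = C\horm{u}^2$, with strict positivity because $m>0$ a.e. and $u\neq 0$. For parts $(iii)$ and $(iv)$, I would use the asymptotic homogeneity of $n_p$ and $n_r$: from (A1)/(A2), $|N_p(x)| \lesssim |x|^{2+p}$ for all $x$ (here the global Lipschitz assumption (C\textsubscript{1}), or rather its consequence that the cut-off $n$ is bounded, ensures linear growth of $n_p$-part is controlled — actually $N_p$ is genuinely homogeneous of degree $2+p$ so $|N_p(x)|=c'|x|^{2+p}$ exactly), hence $|\n_p(u)| \le \int|N_p(u)|\,dx \lesssim \int|u|^{2+p}\,dx = \norm{u}{2+p}^{2+p}$. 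For $(iv)$, since $n_r(x)=\mathcal{O}(|x|^{1+r})$ as $x\to 0$ and $n_r$ is (globally) Lipschitz with $n_r(0)=0$ by (C\textsubscript{1}), one gets $|n_r(x)|\lesssim |x|^{1+r}+|x|$, so $|N_r(x)|\lesssim |x|^{2+r}+x^2$; applying this to $u+v$ and using $|a+b|^{q}\lesssim |a|^q+|b|^q$ together with interpolating the $L^2$ part appropriately should yield $|\n_r(u+v)|\lesssim \norm{u}{2+r}^{2+r}+\norm{v}{2+p}^{2+p}$ — I would split the integration region into where $u+v$ is the ``dominant'' small term versus controlled by $v$, or more cleanly bound $|N_r(x)|\lesssim|x|^{2+r}$ on $|x|\le 1$ and $|N_r(x)|\lesssim|x|^{2+p}$ for $|x|\ge 1$ (using $r>p$), then distribute. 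Part $(ii)$ then follows by combining $(iii)$, $(iv)$ (with $v=0$), the embeddings $H^{s/2}\hookrightarrow L^{2+p}$ and $H^{s/2}\hookrightarrow L^{2+r}$ (valid since $s > p/(2+p)$ and, one must check, $s>r/(2+r)$ — or else this is where a restriction like $\horm{u}$ bounded, or the cut-off, enters), and Gagliardo--Nirenberg to trade the surplus powers for $\q(u)=\frac12\norm{u}{2}^2$.

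I expect the main obstacle to be part $(ii)$, the bound $|\n(u)|\lesssim \q(u)$: this is the only estimate that is \emph{not} merely a power count, since $\q(u)$ is quadratic while $N_p$ and $N_r$ have superquadratic leading order. The resolution must exploit that $n$ has been cut off (assumption (C\textsubscript{1}), so $n$ is bounded, hence $|N_p(x)|,|N_r(x)|\lesssim|x|$ for large $|x|$ and $\lesssim |x|^{2+p}$, $|x|^{2+r}$ for small $|x|$), giving $|N_p(x)+N_r(x)| \lesssim \min(|x|,|x|^{2})\le x^2$ on $|x|\le 1$ — wait, for small $x$ the bound is $|x|^{2+p}\le x^2$, and for large $x$ we only have $|x|$, not $x^2$. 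So the clean statement $|\n(u)|\lesssim\q(u)$ with a \emph{uniform} constant must rely on an $a$~priori bound on $\norm{u}{\infty}$ or $\horm{u}$ on the set where the proposition is applied; I would need to check the precise hypothesis (likely $u$ ranges over a norm-bounded set, or one uses that $H^{1+s}\hookrightarrow L^\infty$ is not yet available but $H^{s/2}$ functions with controlled norm are), and insert $|N_p(x)+N_r(x)|\lesssim x^2$ on $|x|\le M$ together with the cut-off's linear growth absorbed via $\norm{u}{1}$-type control from $\norm{u}{2}$ on the (measure-finite, since $u\in L^2$) superlevel set. I would present $(ii)$ last, deriving it from the preceding three plus the embeddings, and flag that the implicit constant there may depend on an ambient bound that the paper's setup supplies.
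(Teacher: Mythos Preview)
Your treatment of $(i)$, $(iii)$ and $(iv)$ is essentially the paper's argument; in particular, the bound $|N_r(x)|\lesssim |x|^{2+r}$ for $|x|\le 1$ and $|N_r(x)|\lesssim |x|^{2+p}$ for $|x|\ge 1$, combined into $|N_r(x)|\lesssim\min\{|x|^{2+r},|x|^{2+p}\}$, is exactly how the paper proceeds for $(iv)$. One small correction there: it is $n$, not $n_r$, that is globally Lipschitz under (C\textsubscript{1}); you recover the large-$|x|$ bound on $N_r$ via $|n_r(x)|=|n(x)-n_p(x)|\lesssim |x|+|x|^{1+p}$.

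The genuine gap is in $(ii)$. You misread (C\textsubscript{1}) as ``$n$ is bounded'' and then try to bound $N_p$ and $N_r$ separately. Both moves fail. First, (C\textsubscript{1}) says $n$ is \emph{globally Lipschitz}, not bounded; since $n(0)=0$ (both $n_p$ and $n_r$ vanish at the origin by (A)), this gives $|n(x)|\lesssim |x|$ for \emph{all} $x$, hence $|N(x)|=\big|\int_0^x n\,dt\big|\lesssim x^2$ and immediately $|\n(u)|\lesssim\int u^2\,dx=2\q(u)$ with a uniform constant. No ambient bound on $\|u\|_\infty$ or $\horm{u}$ is needed, and no Sobolev embedding enters. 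Second, the splitting approach cannot work: $N_p$ is exactly homogeneous of degree $2+p>2$, so $|N_p(x)|\simeq |x|^{2+p}$ is \emph{not} $\lesssim x^2$ for large $|x|$, and neither piece is individually Lipschitz. The cancellation that makes $(ii)$ true lives in the full $n$, not in the summands.
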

\begin{proof}
Combining the growth bounds on $m$ from (B) with (C\textsubscript{2}), we see that $0<m(\x)\lesssim\cop{\x}^s$ for $\x\neq 0$, and so bound $(i)$ follows. By (A) and (C\textsubscript{1}), we have $|n(x)|\lesssim |x|$, and so we obtain $(ii)$. From $|n_p(x)|\lesssim|x|^{1+p}$ we immediately get $(iii)$. For $(iv)$, we note that
\begin{align*}
    |N_r(x)|\lesssim |x|^{2+r}, \quad |x|\leq 1,\quad \text{and}\quad
    |N_r(x)|\lesssim |x|^{2+p}, \quad |x|\geq 1,
\end{align*}
where the the first bound follows from $n_r(x)=O(|x|^{1+r})$, while the latter follows from $|n_r(x)|=|n(x)-n_p(x)|\lesssim |x|+|x|^{1+p}$. With this, and the fact that $r>p$, we obtain
\begin{align*}
    |N_r(x)|\lesssim \min\{|x|^{2+r},|x|^{2+p}\},
\end{align*}
or equivalently
\begin{align*}
    \frac{|N_r(x+y)|}{|x|^{2+r}+|y|^{2+p}}\lesssim \min\Bigg\{\frac{|x+y|^{2+r}}{|x|^{2+r}+|y|^{2+p}},\frac{|x+y|^{2+p}}{|x|^{2+r}+|y|^{2+p}}\Bigg\}\eqqcolon\min\Big\{a(x,y),b(x,y)\Big\}.
\end{align*}
Note that $a(x,y)$ and $b(x,y)$ are bounded for $|y|\leq 1$ and $|y|\geq1$ respectively, and so $|N_r(x+y)|\lesssim|x|^{2+r}+|y|^{2+p}$.
\end{proof}

From here on, we will refrain from explicitly referring to the assumptions as done in the previous proof, so to attain a more straight forward presentation.

\begin{proposition}\label{frechetderivative}
The Fréchet derivative of $\q,\l,\n$ and $\e$ at $u\in H^{\frac{s}{2}}$ are the elements in the (dual) space $H^{\frac{-s}{2}}$ given by
\begin{enumerate}[label=(\roman*)]
\item $\q'(u) = u$,
\vspace{5px}
\item $\l'(u) = Lu$,
\vspace{5px}
\item $\n'(u) = n(u)$,
\vspace{5px}
\item $\e'(u) = Lu-n(u)$.
\end{enumerate}
\end{proposition}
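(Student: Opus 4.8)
The plan is to handle the two quadratic functionals $\q$ and $\l$ by direct expansion, the nonlinear functional $\n$ via the fundamental theorem of calculus together with the global Lipschitz bound furnished by (C\textsubscript{1}), and then obtain (iv) from (ii)--(iii) by linearity, since $\e=\l-\n$. Throughout I would use the usual identification of the dual of $H^{\frac{s}{2}}$ with $H^{-\frac{s}{2}}$ via the extension of the $L^2$-pairing, together with the embeddings $H^{\frac{s}{2}}\hookrightarrow L^2\hookrightarrow H^{-\frac{s}{2}}$ (valid since $s>0$), so that it suffices in each case to exhibit the claimed element of $H^{-\frac{s}{2}}$, check it acts boundedly on $H^{\frac{s}{2}}$, and check the remainder is $o(\horm{h})$.

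For $\q$, expanding the square gives $\q(u+h)-\q(u)=\int_\R uh\,dx+\q(h)$; the map $h\mapsto\int_\R uh\,dx$ is precisely the action of $u\in L^2\subset H^{-\frac{s}{2}}$, it is bounded on $H^{\frac{s}{2}}$, and $|\q(h)|=\tfrac12\norm{h}{2}^2\le\tfrac12\horm{h}^2=o(\horm{h})$, so $\q'(u)=u$. For $\l$, since $m$ is real and even and $u,h$ are real-valued, Plancherel's theorem yields $\int_\R m(\x)\hat u\,\overline{\hat h}\,d\x=\int_\R (Lu)h\,dx\in\R$, whence $\l(u+h)-\l(u)=\int_\R(Lu)h\,dx+\l(h)$. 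Here (B) together with (C\textsubscript{2}) gives $0\le m(\x)\lesssim\cop{\x}^s$, so $\norm{Lu}{H^{-s/2}}^2=\int_\R\cop{\x}^{-s}m(\x)^2|\hat u|^2\,d\x\lesssim\horm{u}^2$, i.e.\ $Lu\in H^{-\frac{s}{2}}$ and $h\mapsto\int_\R(Lu)h\,dx$ is bounded; moreover $0\le\l(h)\lesssim\horm{h}^2=o(\horm{h})$. Hence $\l'(u)=Lu$.

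For $\n$ I would first note that $\n$ is finite on $H^{\frac{s}{2}}$ by Proposition~\ref{basicbounds}(ii), that $n(0)=0$ by (A), and that $|n(x)|\le C|x|$ by (C\textsubscript{1}); consequently $n(u)\in L^2\subset H^{-\frac{s}{2}}$ with $\norm{n(u)}{2}\le C\norm{u}{2}$, so $h\mapsto\int_\R n(u)h\,dx$ is a bounded functional on $H^{\frac{s}{2}}$. Writing $N=N_p+N_r$ and using $N(x+y)-N(x)=\int_0^1 n(x+ty)y\,dt$ pointwise, one gets
\[
\n(u+h)-\n(u)-\int_\R n(u)h\,dx=\int_\R\int_0^1\big(n(u+th)-n(u)\big)h\,dt\,dx,
\]
and Lipschitz continuity bounds the integrand by $C|th|\,|h|\le C|h|^2$. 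The right-hand side is therefore at most $C\norm{h}{2}^2\le C\horm{h}^2=o(\horm{h})$, giving $\n'(u)=n(u)$, and (iv) follows at once.

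There is no genuine obstacle in this proposition, and that is exactly the point of the auxiliary hypothesis (C\textsubscript{1}): the global Lipschitz bound makes the remainder in the expansion of $\n$ of quadratic order $O(\horm{h}^2)$, which is trivially $o(\horm{h})$, and it forces the linear growth $|n(x)|\lesssim|x|$ that places $n(u)$ in $L^2\subset H^{-\frac{s}{2}}$. The only points requiring attention are thus the membership statements $Lu\in H^{-\frac{s}{2}}$ (using $m(\x)\lesssim\cop{\x}^s$) and $n(u)\in H^{-\frac{s}{2}}$, and the fact that all error terms are genuinely $o(\horm{h})$ rather than merely $O(\horm{h})$. Had $n$ been assumed only continuous, one would instead have to study the Nemytskii operator $u\mapsto n(u)$ on Lebesgue spaces via the homogeneity of $n_p$, the decay of $n_r$, and Sobolev embedding, and the $o(\horm{h})$ estimate would require a non-quadratic argument; this is the technical reason (C\textsubscript{1}) is imposed here, to be removed afterwards through Lemma~\ref{equivalenceOfAssumptions}.
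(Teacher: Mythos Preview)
Your proof is correct and follows essentially the same approach as the paper: direct expansion for the quadratic functionals $\q$ and $\l$ (using $m(\xi)\lesssim\cop{\xi}^s$ to control the remainder $\l(h)$), the fundamental theorem of calculus together with the global Lipschitz bound from (C\textsubscript{1}) for $\n$, and linearity for $\e$. You are somewhat more explicit than the paper in verifying the dual-space memberships $Lu,\,n(u)\in H^{-\frac{s}{2}}$, but the argument is the same.
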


\begin{proof}
The Fréchet derivative of $\q$ and $\e$ follows from an elementary calculation and linearity of the Fréchet derivative respectively. Turning to $\l$, we note that $L$ is self-adjoint, $\cop{Lu,v}=\cop{u,Lv}$, due to the symmetry of $m$. Consequently $\l(u+v)=\l(u) + \cop{Lu,v} + \l(v)$.
We then obtain
\begin{align*}
    \frac{|\l(u+v)-\l(u) - \cop{Lu,v}|}{\horm{v}}&=\frac{\l(v)}{\horm{v}}
    \lesssim \horm{v}
    \to0\,,
\end{align*}
as $v\to 0$, in $H^{\frac{s}{2}}$, where we used $(i)$ from Prop. \ref{basicbounds}. For $\n$, we exploit the global Lipschitz-continuity of $n$ and calculate
\begin{align*}
    \frac{|\n(u+v)-\n(u) - \cop{n(u),v}|}{\horm{v}}&\leq \frac{1}{\horm{v}}\int_\R |v|\int_0^1|n(u+tv)-n(u)|\,dt\,dx\\
     &\lesssim \frac{\norm{v}{2}^2}{\horm{v}}\to0,
\end{align*}
as $v\to 0$, in $H^{\frac{s}{2}}$.
\end{proof}
One important implication of the previous proposition is the following description of the continuity of $\e$ on $H^{\frac{s}{2}}$, that we shall utilize when excluding dichotomy. 

\begin{corollary}\label{continuityofe}
For $u,v\in H^{\frac{s}{2}}$ we have
\begin{align*}
|\e(u)-\e(v)|\lesssim(\horm{u}+\horm{v})\horm{u-v}.
\end{align*}
\end{corollary}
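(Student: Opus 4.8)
The plan is to leverage Proposition~\ref{frechetderivative} together with a mean-value-type argument along the segment joining $u$ and $v$. Write $\e(u)-\e(v) = \int_0^1 \frac{d}{dt}\e(v + t(u-v))\,dt = \int_0^1 \langle \e'(v+t(u-v)), u-v\rangle\,dt$, using that $\e$ is Fréchet differentiable on $H^{\frac{s}{2}}$ with derivative $\e'(w)=Lw-n(w)\in H^{-\frac{s}{2}}$. The dual pairing then gives $|\e(u)-\e(v)| \le \int_0^1 \norm{\e'(v+t(u-v))}{H^{-\frac{s}{2}}}\,dt \cdot \horm{u-v}$, so it remains to bound the $H^{-\frac{s}{2}}$-norm of $\e'(w)$ linearly in $\horm{w}$ uniformly for $w$ on the segment, and then use $\horm{v+t(u-v)} \le \horm{u}+\horm{v}$ for $t\in[0,1]$.

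First I would handle the linear part: since $|m(\xi)|\lesssim\cop{\xi}^s$ by (B) and (C\textsubscript{2}) (as already noted in the proof of Proposition~\ref{basicbounds}), the operator $L$ maps $H^{\frac{s}{2}}$ boundedly into $H^{-\frac{s}{2}}$, so $\norm{Lw}{H^{-\frac{s}{2}}}\lesssim\horm{w}$. Next, for the nonlinear part, global Lipschitz continuity of $n$ together with $n(0)=0$ (which follows from (A), since $n_p(0)=0$ and $n_r(x)=\mathcal{O}(|x|^{1+r})$) gives $|n(x)|\lesssim|x|$, hence $\norm{n(w)}{2}\lesssim\norm{w}{2}\le\sqrt{2}\,\horm{w}$; since $L^2 = H^0 \hookrightarrow H^{-\frac{s}{2}}$ (as $s>0$), we get $\norm{n(w)}{H^{-\frac{s}{2}}}\lesssim\horm{w}$. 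Combining, $\norm{\e'(w)}{H^{-\frac{s}{2}}}\lesssim\horm{w}$, and substituting into the integral representation yields the claim.

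An alternative, perhaps cleaner, route avoids the fundamental theorem of calculus entirely: expand directly. For $\l$, bilinearity gives $\l(u)-\l(v) = \langle L(u-v), \tfrac{1}{2}(u+v)\rangle$ (using self-adjointness of $L$), which is bounded by $\norm{L(u-v)}{H^{-\frac{s}{2}}}\horm{u+v}\lesssim\horm{u-v}(\horm{u}+\horm{v})$. For $\n$, write $\n(u)-\n(v)=\int_\R\int_0^1 n(v+t(u-v))(u-v)\,dt\,dx$, then bound $|n(v+t(u-v))|\lesssim|v|+|u-v|\lesssim|u|+|v|$ pointwise and apply Cauchy--Schwarz in $x$ to get $|\n(u)-\n(v)|\lesssim(\norm{u}{2}+\norm{v}{2})\norm{u-v}{2}\lesssim(\horm{u}+\horm{v})\horm{u-v}$. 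Adding the two estimates gives the result.

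I do not anticipate a genuine obstacle here — this is a routine consequence of Proposition~\ref{frechetderivative} and the growth bounds already established. The only point requiring minor care is making sure every estimate is uniform along the segment (equivalently, that the crude pointwise bound $|n(v+t(u-v))|\lesssim|u|+|v|$ and the norm bound $\horm{v+t(u-v)}\le\horm{u}+\horm{v}$ hold for all $t\in[0,1]$), which is immediate. One should also note that the implicit constant depends only on the Lipschitz constant of $n$ and on the symbol bound for $m$, both fixed throughout Sections~2--7.
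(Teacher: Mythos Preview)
Your proposal is correct and follows essentially the same approach as the paper: the paper bounds $|\langle \e'(w),h\rangle|\lesssim\horm{w}\horm{h}$ via $|\langle Lw,h\rangle|+|\langle n(w),h\rangle|$ (exactly your linear-plus-nonlinear split) and then applies the mean-value inequality along the segment from $v$ to $u$, using $\max_{0\le t\le 1}$ where you use $\int_0^1$. Your alternative route is also fine and amounts to the same estimates rearranged.
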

\begin{proof}
Using $|n(u)|\lesssim|u|$ and $m(\xi)\lesssim\cop{\xi}^s$, we have for arbitrary $u,v\in H^{\frac{s}{2}}$
\begin{align*}
    |\cop{\e'(u),v}|&\leq |\cop{Lu,v}|+ |\cop{n(u),v}|\\
    &\lesssim\horm{u}\horm{v}+ \norm{u}{2}\norm{v}{2}\lesssim\horm{u}\horm{v}.
\end{align*}
We then conclude
\begin{align*}
    |\e(u)-\e(v)|&\leq\max_{0\leq t\leq1}|\cop{\e'(v+(u-v)t),u-v}|\\
    &\lesssim(\horm{u}+\horm{v})\horm{u-v}.
\end{align*}
\end{proof}

The uniform continuity of $\xi\mapsto m(\xi)/\cop{\xi}^s$ is a simple assumption to state, but not directly convenient to work with. Instead we shall use an implied regularity constraint on $m$, described by the next lemma.
\begin{lemma}
There is a function $\omega\colon \R\to[0,\infty)$, bounded above by a polynomial, with $\lim_{t\to0}\omega(t)=0$, such that 
\begin{align}\label{moreconvenientbound}
    |m(\xi)-m(\eta)|\leq \omega(\xi-\eta) \cop{\xi}^{\frac{s}{2}}\cop{\eta}^{\frac{s}{2}}.
\end{align}
\end{lemma}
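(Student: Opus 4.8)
The plan is to reduce the estimate to the uniform continuity of $g:=m/\cop{\cdot}^{s}$, combined with two elementary facts about the weight $\cop{\cdot}^{s}$: the Peetre-type inequality $\cop{\xi}\le\sqrt{2}\,\cop{\xi-\eta}\,\cop{\eta}$, and the derivative bound $\bigl|\tfrac{d}{dt}\cop{t}^{s}\bigr|\le s\,\cop{t}^{s-1}$. Assumptions (B) and (C\textsubscript{2}) give $0\le m(\xi)\lesssim\cop{\xi}^{s}$ (compare the proof of Proposition~\ref{basicbounds}), so $g$ is bounded, say $\|g\|_{\infty}\le M$, and by (B) it is uniformly continuous; I would fix its modulus of continuity $\varpi(\delta):=\sup\{\,|g(\xi)-g(\eta)|:|\xi-\eta|\le\delta\,\}$, which is nondecreasing, bounded by $2M$, and satisfies $\varpi(\delta)\to0$ as $\delta\to0^{+}$.

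First I would split
\[
m(\xi)-m(\eta)=g(\xi)\bigl(\cop{\xi}^{s}-\cop{\eta}^{s}\bigr)+\bigl(g(\xi)-g(\eta)\bigr)\cop{\eta}^{s},
\]
so that $|m(\xi)-m(\eta)|\le M\bigl|\cop{\xi}^{s}-\cop{\eta}^{s}\bigr|+\varpi(|\xi-\eta|)\,\cop{\eta}^{s}$, and it remains to bound each summand by an expression of the form $\omega(\xi-\eta)\,\cop{\xi}^{s/2}\cop{\eta}^{s/2}$. The second summand is disposed of directly: from $\cop{\eta}^{s}=\cop{\eta}^{s/2}\cop{\eta}^{s/2}$ and the Peetre inequality $\cop{\eta}^{s/2}\le 2^{s/4}\cop{\xi-\eta}^{s/2}\cop{\xi}^{s/2}$ one gets $\varpi(|\xi-\eta|)\cop{\eta}^{s}\le 2^{s/4}\varpi(|\xi-\eta|)\cop{\xi-\eta}^{s/2}\cdot\cop{\xi}^{s/2}\cop{\eta}^{s/2}$, and $t\mapsto\varpi(|t|)\cop{t}^{s/2}$ is nonnegative, vanishes as $t\to0$, and (since $\varpi\le 2M$) is dominated by a polynomial.

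The one place that needs attention is the first summand $\bigl|\cop{\xi}^{s}-\cop{\eta}^{s}\bigr|$; here I would split on the size of $|\xi-\eta|$. When $|\xi-\eta|\le 1$, Peetre gives $\tfrac12\cop{\eta}\le\cop{\xi}\le 2\cop{\eta}$, so that $\cop{\xi}$ and $\cop{\eta}$ are comparable with absolute constants and in particular $\max(\cop{\xi},\cop{\eta})^{s}\lesssim\cop{\xi}^{s/2}\cop{\eta}^{s/2}$; running the mean value theorem for $t\mapsto\cop{t}^{s}$ on the segment joining $\xi$ and $\eta$, and controlling $\cop{t}^{s-1}$ there by $\max(\cop{\xi},\cop{\eta})^{\max\{s-1,0\}}\le\max(\cop{\xi},\cop{\eta})^{s}$, then gives $\bigl|\cop{\xi}^{s}-\cop{\eta}^{s}\bigr|\lesssim|\xi-\eta|\,\cop{\xi}^{s/2}\cop{\eta}^{s/2}$. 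When $|\xi-\eta|>1$, there is no short segment to integrate over, but the crude bound $\bigl|\cop{\xi}^{s}-\cop{\eta}^{s}\bigr|\le\cop{\xi}^{s}+\cop{\eta}^{s}$ followed by Peetre applied to each summand gives $\bigl|\cop{\xi}^{s}-\cop{\eta}^{s}\bigr|\lesssim\cop{\xi-\eta}^{s/2}\cop{\xi}^{s/2}\cop{\eta}^{s/2}$. (Both $\bigl|\cop{\xi}^{s}-\cop{\eta}^{s}\bigr|$ and $\cop{\xi}^{s/2}\cop{\eta}^{s/2}$ are symmetric in $\xi$ and $\eta$, so in each case one may assume $\cop{\xi}\ge\cop{\eta}$.)

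Putting the pieces together, the lemma holds with, for instance,
\[
\omega(t)=C_{s}\bigl(\min\{|t|,1\}+\varpi(|t|)\bigr)\cop{t}^{s/2},
\]
where $C_{s}$ depends only on $s$ and $M$: this $\omega$ is nonnegative, tends to $0$ as $t\to0$, and is dominated by the polynomial $C_{s}(1+2M)\cop{t}^{s/2}$. The only genuine obstacle is the exponent bookkeeping in the near-diagonal regime $|\xi-\eta|\le 1$: one must notice that $\cop{\xi}$ and $\cop{\eta}$ are then comparable, which is precisely what lets the derivative weight $\cop{t}^{s-1}$ (which for $s>1$ beats $\cop{t}^{s/2}$) be reabsorbed into the symmetric product $\cop{\xi}^{s/2}\cop{\eta}^{s/2}$. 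Everything else is routine manipulation with the Peetre inequality and the boundedness of $\varpi$.
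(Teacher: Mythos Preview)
Your proof is correct and follows essentially the same route as the paper: the same decomposition $m(\xi)-m(\eta)=g(\xi)(\cop{\xi}^s-\cop{\eta}^s)+(g(\xi)-g(\eta))\cop{\eta}^s$, the same use of the modulus of continuity of $g=m/\cop{\cdot}^s$, and the same Peetre inequality $\cop{\xi}\lesssim\cop{\xi-\eta}\cop{\eta}$ to absorb the weights. The only cosmetic difference is that the paper avoids your near/far case split on $|\xi-\eta|$ by using the single global bound $|\cop{\xi}^s-\cop{\eta}^s|\lesssim(\cop{\xi}^s+\cop{\eta}^s)|\xi-\eta|$ and then Peetre, arriving at $\omega(t)\simeq(\tilde\omega(t)+|t|)\cop{t}^{s/2}$ directly.
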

\begin{proof}
Firstly, the bound $|\cop{\x}^s-\cop{\eta}^s|\lesssim (\cop{\xi}^{s}+\cop{\eta}^{s})|\xi-\eta|$, is easily obtained by the mean value theorem together with crude upper bounds. By assumption, there is a modulus of continuity $\tilde{\omega}$ so that
\begin{align}\label{omegaintroduction}
    \Big|\frac{m(\xi)}{\cop{\xi}^s}-\frac{m(\eta)}{\cop{\eta}^s}\Big|\leq\tilde{\omega}(\xi-\eta),
\end{align} and $\lim_{\lambda\to0}\tilde{\omega}(\lambda)=0$. As $m(\cdot)/\cop{\cdot}^s$ is a bounded function, we can assume $\tilde{\omega}$ to also be bounded. We arrive at
\begin{align*}
    |m(\x)-m(\eta)|&\leq\Big|\frac{m(\xi)}{\cop{\xi}^s}-\frac{m(\eta)}{\cop{\eta}^s}\Big|\cop{\x}^s + \frac{m(\eta)}{\cop{\eta}^s}|\cop{\xi}^s-\cop{\eta}^s|\\
    &\lesssim \tilde{\omega}(\x-\eta)\cop{\x}^s + |\xi-\eta|(\cop{\xi}^{s}+\cop{\eta}^{s})\\
    &\lesssim\big(\tilde{\omega}(\x-\eta) + |\xi-\eta|\big)\cop{\xi-t}^{\frac{s}{2}}\cop{\xi}^{\frac{s}{2}}\cop{\eta}^{\frac{s}{2}},\\
    &\eqqcolon\omega(\xi-\eta)\cop{\xi}^{\frac{s}{2}}\cop{\eta}^{\frac{s}{2}},
\end{align*}
where we used the estimate $\cop{x}\lesssim\cop{x-y}\cop{y}$, when going from second to third line.
\end{proof}
By a more careful argument, it is possible to show that the two regularity constraints \eqref{moreconvenientbound} and \eqref{omegaintroduction} are equivalent without any a priori knowledge of $m$, although we shall not prove this.

We conclude this section with the concentration-compactness theorem; the foundation of our proof of Theorem \ref{main}.
\begin{theorem}[ Lions \cite{MR778970}, concentration-compactness]\label{concentration}
Any sequence $(\rho_k)\subset L^1$ of non-negative functions with the property
\begin{equation*}
    \int_\R \rho_k dx = \mu >0,
\end{equation*}
admits a subsequence, denoted again by $(\rho_k)$, for which one of the following phenomena occurs.\\
Vanishing: For each $r>0$, $k\to\infty$ implies that
\begin{equation*}
    \sup_{x_0\in\R}\int_{-r}^{r}\rho_k(x-x_0)dx\to0.
\end{equation*}
Dichotomy: There exist $\lambda\in(0,\mu)$, and sequences $(x_k)\subset\R$ and $(r_k),(\tilde{r}_k)\subset\R^+$, so that when $k\to\infty$
    \begin{align*}
  \int_{-r_k}^{r_k}\rho_k(x-x_k) dx&\to\lambda, & r_k&\to\infty, \\
    \int_{-\tilde{r}_k}^{\tilde{r}_k}\rho_k(x-x_k) dx&\to\lambda, & \tilde{r}_k/r_k&\to\infty,
\end{align*}
Concentration: There is a sequence $(x_k)\subset\R$, so that for each $\varepsilon>0$ there exists $r<\infty$ satisfying for all $k\in \N$
\begin{equation*}
    \int_{-r}^{r}\rho_k(x-x_k)\,dx \geq \mu -\varepsilon.
\end{equation*}
\end{theorem}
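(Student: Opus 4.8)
The statement to prove is the concentration-compactness theorem of Lions (Theorem~\ref{concentration}). This is a classical result, so my plan is to reproduce the standard argument based on the \emph{concentration function} (L\'evy concentration function), tracking the three alternatives carefully.

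\textbf{Setup via the concentration function.} For each $k$, define $Q_k\colon[0,\infty)\to[0,\mu]$ by
\[
Q_k(r) = \sup_{x_0\in\R}\int_{x_0-r}^{x_0+r}\rho_k(x)\,dx.
\]
Each $Q_k$ is nondecreasing, and $Q_k(r)\to\mu$ as $r\to\infty$ (by dominated convergence, since $\rho_k\in L^1$ with integral $\mu$). The family $(Q_k)$ is uniformly bounded by $\mu$, so by a Helly-type selection argument (the functions are monotone, so pointwise-a.e. convergence on a countable dense set passes to a subsequence, and one extends to a nondecreasing limit) there is a subsequence — relabelled $(\rho_k)$ — and a nondecreasing $Q\colon[0,\infty)\to[0,\mu]$ with $Q_k(r)\to Q(r)$ at all continuity points of $Q$. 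Set
\[
\lambda = \lim_{r\to\infty} Q(r)\in[0,\mu].
\]
The three cases $\lambda=0$, $\lambda=\mu$, and $0<\lambda<\mu$ will correspond to vanishing, concentration, and dichotomy respectively.

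\textbf{The cases $\lambda=0$ and $\lambda=\mu$.} If $\lambda=0$, then $Q(r)=0$ for every $r$, hence $Q_k(r)\to0$ for each $r>0$ (every point is a continuity point of the zero function), which is precisely vanishing. If $\lambda=\mu$: given $\varepsilon>0$, pick a continuity point $r$ of $Q$ with $Q(r)>\mu-\varepsilon/2$; then for $k$ large, $Q_k(r)>\mu-\varepsilon$, so there is $x_k\in\R$ with $\int_{x_k-r}^{x_k+r}\rho_k\,dx>\mu-\varepsilon$. A diagonal/monotonicity argument over a decreasing sequence $\varepsilon_j\to0$ (using that larger radii only increase the integral, and that we may take the radii $r_j$ increasing) produces a single sequence $(x_k)$ that works for all $\varepsilon$, giving concentration. (One must be slightly careful to get one sequence $(x_k)$ rather than one per $\varepsilon$; this is handled by choosing, for the $j$-th stage, $k$ so large that $Q_k(r_j)>\mu-\varepsilon_j$ simultaneously for the finitely many constraints seen so far, then diagonalizing.)

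\textbf{The dichotomy case $0<\lambda<\mu$.} This is the main work. Fix a sequence $\varepsilon_j\downarrow0$. For each $j$, choose a continuity point $R_j$ of $Q$ large enough that $Q(R_j)>\lambda-\varepsilon_j$; since $Q(r)\uparrow\lambda$ we also have $Q(r)<\lambda+\varepsilon_j$ for all $r$. For $k$ large (depending on $j$), $Q_k(R_j)>\lambda-\varepsilon_j$, so pick $x_k^{(j)}$ with $\int_{x_k^{(j)}-R_j}^{x_k^{(j)}+R_j}\rho_k\,dx>\lambda-\varepsilon_j$. On the other hand, for any radius $\tilde r$, $\int_{x_k^{(j)}-\tilde r}^{x_k^{(j)}+\tilde r}\rho_k\,dx\le Q_k(\tilde r)$, which for $k$ large is close to $Q(\tilde r)<\lambda+\varepsilon_j$ when $\tilde r$ is a continuity point. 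The point is that the mass in the annulus $R_j\le |x-x_k^{(j)}|\le \tilde r$ is at most $\lambda+\varepsilon_j-(\lambda-\varepsilon_j)=2\varepsilon_j$ for \emph{any} such larger radius $\tilde r$, while the mass outside radius $\tilde r$ tends to $\mu-\lambda$ (since the total is $\mu$ and the inner ball carries $\le\lambda+\varepsilon_j$ but also, for suitable large $\tilde r$, $\ge\lambda-\varepsilon_j$ worth — one needs the complementary estimate that for large $\tilde r$ the ball of radius $\tilde r$ captures close to $\lambda$, not more). Concretely: for each $j$ we get $R_j$ and, by letting $\tilde r\to\infty$ through continuity points and using $Q_k(\tilde r)\to Q(\tilde r)<\lambda+\varepsilon_j$, a radius $\tilde R_j> 2R_j$ (say) and a threshold $K_j$ such that for $k\ge K_j$,
\[
\int_{x_k^{(j)}-R_j}^{x_k^{(j)}+R_j}\rho_k\,dx>\lambda-\varepsilon_j,\qquad
\int_{x_k^{(j)}-\tilde R_j}^{x_k^{(j)}+\tilde R_j}\rho_k\,dx<\lambda+\varepsilon_j.
\]
Now diagonalize: choose an increasing sequence $k\mapsto j(k)\to\infty$ with $k\ge K_{j(k)}$, and set $x_k=x_k^{(j(k))}$, $r_k=R_{j(k)}$, $\tilde r_k=\tilde R_{j(k)}$. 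Since $R_j\to\infty$ (we may enforce this, as $Q(r)<\lambda$ for all finite $r$ forces the defining radii to grow) we get $r_k\to\infty$; arranging $\tilde R_j/R_j\to\infty$ at the selection stage (always possible, choosing $\tilde R_j$ much larger than $R_j$) gives $\tilde r_k/r_k\to\infty$; and both displayed integrals converge to $\lambda$. This yields exactly the dichotomy conclusion as stated.

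\textbf{Main obstacle.} The only genuinely delicate points are (a) the Helly-type extraction of a limiting concentration function $Q$ together with convergence $Q_k\to Q$ at continuity points, and (b) in the dichotomy case, the bookkeeping needed to pass from the family of radii $\{R_j,\tilde R_j\}$ and centers $\{x_k^{(j)}\}$ — indexed by both $j$ and $k$ — to a single diagonal sequence $(x_k,r_k,\tilde r_k)$ with all three limit relations holding simultaneously. Both are standard diagonal arguments, but care is required to ensure that a \emph{single} center sequence $(x_k)$ serves both the inner estimate and the outer estimate for each $k$; this is why one fixes $R_j$ first (choosing the center to nearly realize $Q_k(R_j)$) and only then enlarges to $\tilde R_j$, using that the \emph{same} center cannot capture much more than $\lambda$ at any larger radius because of the global bound $Q_k(\tilde r)\to Q(\tilde r)<\lambda+\varepsilon_j$.
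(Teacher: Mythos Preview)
The paper does not prove Theorem~\ref{concentration}; it is stated with a citation to Lions and used as a black box throughout. Your proposal therefore cannot be compared to a proof in the paper, but it does follow Lions' original concentration-function argument, and the overall structure (Helly extraction of a limiting $Q$, case split on $\lambda=\lim_{r\to\infty}Q(r)$) is correct.

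One point needs sharpening. In the concentration case ($\lambda=\mu$), the diagonal argument you sketch does not by itself produce a \emph{single} sequence $(x_k)$ valid for every $\varepsilon$: if you set $x_k=x_k^{(j(k))}$, then for a fixed level $\varepsilon_j$ and $k$ large with $j(k)>j$, the center $x_k^{(j(k))}$ was chosen to nearly realize $Q_k(r_{j(k)})$, not $Q_k(r_j)$, and there is no a~priori reason the $r_j$-ball around it carries mass $>\mu-\varepsilon_j$. The standard repair is an overlapping-intervals argument: first fix $(x_k)$ once and for all by nearly realizing $Q_k(r_0)$ for some continuity point $r_0$ with $Q(r_0)>\mu/2$; then for any $\varepsilon\in(0,\mu/2)$, pick a continuity point $r'$ with $Q(r')>\mu-\varepsilon$, choose $y_k$ nearly realizing $Q_k(r')$, observe that the intervals $[x_k-r_0,x_k+r_0]$ and $[y_k-r',y_k+r']$ must intersect (their combined mass exceeds $\mu$), hence $|x_k-y_k|\le r_0+r'$, and conclude that $r=2r'+r_0$ works with the fixed centers $x_k$. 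For the finitely many small $k$ one simply enlarges $r$.

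The dichotomy case is essentially correct. Your parenthetical justification ``$Q(r)<\lambda$ for all finite $r$'' need not hold ($Q$ may reach $\lambda$ at a finite radius), but this is harmless: one can always choose the $R_j$ to tend to infinity among continuity points of $Q$, and the rest of your construction goes through.
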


\section{Upper and lower bounds for $I_{\mu}$}\label{bounds}
In this section, we prove that the infimum $I_\mu$ of the minimization problem \eqref{problem} satisfies $-\infty<I_\mu< - \k\m^{1+\b}$, for two positive constants $\k$ and $\beta$. The upper bound will give us Prop. \ref{complexbound}, which declares some fruitful bounds on near minimizers. The importance of also having a lower bound is the trivial consequence $I_\mu\neq -\infty$, allowing Prop. \ref{subadditive} to be meaningful. For clarity, we note that $\mu_*$, as of now, is an arbitrary fixed positive upper bound for $\mu$.
The proof of the following proposition is inspired by \cite{MR2979975}.
\begin{proposition}\label{sizeImu}
There exists $\kappa>0$, so that for $\m\in(0,\mu_*)$, we have ${-\infty<I_{\mu}< - \kappa\mu^{1+\b}}$, where the exponent $\b=s'p/(2s'-p)$.
\end{proposition}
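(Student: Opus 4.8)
The plan is to establish the two bounds separately: the lower bound $I_\mu > -\infty$ is a soft consequence of the functional inequalities already in hand, while the upper bound $I_\mu < -\kappa\mu^{1+\beta}$ is the substantive part and requires a carefully scaled long-wave test function.

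For the lower bound, I would take any $u \in U_\mu$ and estimate $\mathcal{E}(u) = \mathcal{L}(u) - \mathcal{N}(u) \geq -\mathcal{N}(u) \geq -|\mathcal{N}(u)|$. By Proposition~\ref{basicbounds}(ii), $|\mathcal{N}(u)| \lesssim \mathcal{Q}(u) = \mu$, so $\mathcal{E}(u) \geq -C\mu > -\infty$ uniformly over $U_\mu$. Taking the infimum gives $I_\mu \geq -C\mu > -\infty$. (One uses $\mathcal{L}(u) \geq 0$ here, which is immediate from $m > 0$ away from the origin together with (C\textsubscript{2}).)

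For the upper bound, the idea is to plug in a one-parameter family of rescaled bumps and optimize. Fix a Schwartz function $\varphi$ (say with $\hat\varphi$ compactly supported near the origin, or simply a fixed profile), and for small parameters $t, \lambda > 0$ consider $u_{t,\lambda}(x) = t\,\varphi(\lambda x)$, then rescale $t$ so that $\mathcal{Q}(u_{t,\lambda}) = \mu$, which forces $t^2/\lambda \simeq \mu$, i.e. $t \simeq (\mu\lambda)^{1/2}$. On such a function the dispersive term behaves like $\mathcal{L}(u_{t,\lambda}) - \frac{m(0)}{2}\|u_{t,\lambda}\|_2^2 \simeq t^2 \lambda^{s'-1} \simeq \mu \lambda^{s'}$ for small $\lambda$ (using $m(\xi) - m(0) \simeq |\xi|^{s'}$ near zero and the concentration of $\hat{u}_{t,\lambda}$ at frequencies $\sim \lambda$); since $m(0) = 0$ under (C\textsubscript{2}) this is just $\mathcal{L}(u_{t,\lambda}) \simeq \mu\lambda^{s'}$. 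For the nonlinear term, the homogeneous part dominates: $\mathcal{N}_p(u_{t,\lambda}) \simeq \pm t^{2+p}\lambda^{-1} \simeq \pm \mu^{(2+p)/2} \lambda^{p/2}$, with a genuinely negative sign for the leading contribution to $\mathcal{E}$ because of the sign conventions in (A1)/(A2) — this is exactly where the hypothesis $c \neq 0$ (resp. $c > 0$) and the exclusion of $c < 0$ in case (A2) enters. The remainder $\mathcal{N}_r$ contributes at order $t^{2+r}\lambda^{-1} \simeq \mu^{(2+r)/2}\lambda^{r/2}$, which, since $r > p$ and $\lambda$ will be small, is lower order than the $\mathcal{N}_p$ term (this needs the bound $|N_r(x)| \lesssim |x|^{2+r}$ for the relevant range, valid for $|x|$ small, i.e. for $t\|\varphi\|_\infty$ small, which holds once $\mu$ and $\lambda$ are small). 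Thus
\begin{align*}
\mathcal{E}(u_{t,\lambda}) \lesssim \mu\lambda^{s'} - c_0\,\mu^{(2+p)/2}\lambda^{p/2} + C\mu^{(2+r)/2}\lambda^{r/2}
\end{align*}
for a constant $c_0 > 0$. Now optimize in $\lambda$: balancing the first two terms, $\lambda^{s' - p/2} \simeq \mu^{p/2}$, i.e. $\lambda \simeq \mu^{p/(2s'-p)}$ (this requires $s' > p/2$, which is precisely the hypothesis in (B)), and with this choice both leading terms are $\simeq \mu^{1 + \beta}$ with $\beta = s'p/(2s'-p)$ — one checks $1 + \frac{s'p}{2s'-p} = 1 + s' \cdot \frac{p}{2s'-p}$ matches the exponent $1 + s'\cdot(\text{exponent of }\lambda)$ from the first term and likewise for the second. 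The remainder term gets exponent $\frac{2+r}{2} + \frac{r}{2}\cdot\frac{p}{2s'-p}$, which strictly exceeds $1+\beta$ because $r > p$, so for $\mu$ small enough it is absorbed, leaving $\mathcal{E}(u_{t,\lambda}) \leq -\kappa\mu^{1+\beta}$ for a suitable $\kappa > 0$ and all $\mu \in (0,\mu_*)$ after possibly shrinking $\mu_*$. Since $u_{t,\lambda} \in U_\mu$, this bounds $I_\mu$ from above.

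The main obstacle — and the place demanding the most care — is making the asymptotics $\mathcal{L}(u_{t,\lambda}) \simeq \mu\lambda^{s'}$ rigorous: one must show that the contribution from frequencies $|\xi| \geq 1$ (where only $m(\xi) \lesssim |\xi|^s$ is known, not the sharp $|\xi|^{s'}$ behavior) is negligible compared to $\mu\lambda^{s'}$ as $\lambda \to 0$. This is handled by choosing $\varphi$ with $\hat\varphi$ supported in $\{|\xi| \leq 1\}$ (so that $\hat{u}_{t,\lambda}$ is supported in $\{|\xi| \leq \lambda\} \subset \{|\xi| \leq 1\}$ for $\lambda \leq 1$), reducing everything to the clean regime $|\xi| < 1$ where $m(\xi) \simeq |\xi|^{s'}$ applies directly; then $\mathcal{L}(u_{t,\lambda}) = \tfrac12\int m(\xi)|\hat{u}_{t,\lambda}|^2\,d\xi \simeq t^2\lambda^{-1}\int_{|\eta|\leq 1}|\lambda\eta|^{s'}|\hat\varphi(\eta)|^2\,d\eta \simeq t^2\lambda^{s'-1}$. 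The secondary subtlety is ensuring the sign in front of the $\mathcal{N}_p$ term is correct in both cases (A1) and (A2): in case (A2) with $c > 0$ one needs $\int N_p(u_{t,\lambda}) > 0$, which holds since $N_p(x) = \frac{c}{2+p}x|x|^p \cdot \tfrac{2+p}{?}$... more simply $N_p(x) \geq 0$ when $c>0$ fails for $x<0$ — so one instead picks $\varphi \geq 0$, making $u_{t,\lambda} \geq 0$ and $N_p(u_{t,\lambda}) = \frac{c}{2+p}u_{t,\lambda}^{2+p} > 0$; in case (A1), $N_p(x) = \frac{c}{2+p}|x|^{2+p}$ already has fixed sign equal to that of $c \neq 0$, and if $c < 0$ one instead notes $-\mathcal{N}_p > 0$ automatically, while if $c>0$ one argues as before — in all admissible cases a definite-sign negative contribution of order $\mu^{(2+p)/2}\lambda^{p/2}$ to $\mathcal{E}$ is produced, which is what the argument needs.
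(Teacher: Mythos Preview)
Your approach is essentially the paper's: a long-wave ansatz with $\hat\varphi$ compactly supported in $\{|\xi|\leq 1\}$, scaled so that $\mathcal{Q}=\mu$, with the dilation parameter chosen to balance $\mathcal{L}$ against $\mathcal{N}_p$. The lower bound and the frequency-localization trick for $\mathcal{L}$ are handled exactly as in the paper.

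There is, however, a genuine error in your sign discussion: you have the formulas for $N_p$ in cases (A1) and (A2) interchanged. In case (A1), $n_p(x)=c|x|^{1+p}$ gives $N_p(x)=\frac{c}{2+p}\,\mathrm{sgn}(x)\,|x|^{2+p}$, which is \emph{odd-like} and changes sign; in case (A2), $n_p(x)=cx|x|^p$ gives $N_p(x)=\frac{c}{2+p}|x|^{2+p}$, which is even and has the sign of $c$ for all $x$. So your claim that in (A1) ``$N_p(x)=\frac{c}{2+p}|x|^{2+p}$ already has fixed sign'' is false, and in particular for (A1) with $c<0$ the assertion ``$-\mathcal{N}_p>0$ automatically'' fails: if $\varphi$ has both signs, $\mathcal{N}_p(\varphi)$ could be anything. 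The paper's clean fix covers all admissible cases uniformly: choose $\varphi$ with $c\,\varphi(x)\geq 0$ (e.g.\ a signed multiple of $\mathrm{sinc}^2$), which forces $N_p(\varphi)=\frac{|c|}{2+p}|\varphi|^{2+p}$ in both (A1) and (A2) and gives the required strictly positive $\mathcal{N}_p$.

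A minor secondary point: the paper avoids ``possibly shrinking $\mu_*$'' by introducing a free small parameter $B$ in the scaling, $t^{-s'}=B\mu^\beta$, and choosing $B$ small enough that both $\kappa=C_1B^{p/(2s')}-C_2B>0$ and the remainder (which carries an extra factor $B^{r/(2s')}$) is dominated by $\kappa\mu^{1+\beta}$ on the whole of $(0,\mu_*)$. This matters later, since subsequent steps will shrink $\mu_*$ and one wants $\kappa$ to stay fixed.
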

\begin{proof}
Note that $(i)$ and $(ii)$ in Prop. \ref{basicbounds}, immediately gives us that $I_\m>-C\mu$ for some $C<\infty$. For the upper bound, we pick a function $\varphi$, satisfying supp$(\hat{\varphi})\subset(-1,1)$, $\q(\varphi)=1$ and $c\varphi(x)\geq0$. This last inequality implies that $\n_p(\varphi) = \frac{|c|}{2+p}\norm{\varphi}{2+p}^{2+p}\,$. An example of such a function would be an appropriately scaled version of $x\mapsto$ sinc$(x)^2$. We define the ansatz function $\varphi_{\m,t}(x) = \sqrt{\frac{\m}{t}}\varphi(x/t)$, for $t\geq1$. By a substitution of variables we obtain
\begin{align}\label{knorm}
    \norm{\varphi_{\m,t}}{k}^k =\m\Bok\frac{\mu}{t}\Bck^{\frac{k}{2}-1}\norm{\varphi}{k}^k\,.
\end{align}
When $k=2$, we get $\q(\varphi_{\m,t})=\mu$, and moreover
\begin{align*}
\n_p(\varphi_{\m,t})&= \frac{|c|}{2+p}\norm{\varphi_{\m,t}}{2+p}^{2+p}\, \eqqcolon C_1\mu\Bok\frac{\mu}{t}\Bck^{\frac{p}{2}},\\
\n_r(\varphi_{\m,t}) &\lesssim \norm{\varphi_{\m,t}}{2+r}^{2+r} = \O(\m)\Bok\frac{\mu}{t}\Bck^{\frac{r}{2}}.
\end{align*}
Exploiting the local growth of $m$, a simple computation gives the inequality $\l(\varphi_{\m,t}) \leq C_2\m/t^{s'}$, for some $C_2<\infty$. We evaluate the ansatz to obtain
\begin{equation*}
\begin{split}
I_\m\leq\e(\varphi_{\m,t}) &\leq -\Bigg[C_1\Bok\frac{\mu}{t}\Bck^{\frac{p}{2}}-\frac{C_2}{t^{s'}}\Bigg]\m + \O(\mu)\Bok\frac{\mu}{t}\Bck^{\frac{r}{2}}\,.
\end{split}
\end{equation*}
We set $t^{-s'} =B\m^{\b}$ with $\b=s'p/(2s'-p)$, where $B>0$ is small enough to guarantee $t\geq1$ for $\mu\in(0,\mu_*)$. The inequality above becomes
\begin{equation*}
\begin{split}
I_\m&\leq - \underbrace{\bigg[C_1B^{\frac{p}{2s'}}-C_2B\bigg]}_{\let\scriptstyle\textstyle
    \substack{2\k}}\mu^{1+\b} + B^{\frac{r}{2s'}}\O\bo\mu^{1+\b +\frac{r-p}{2}}\bc\,.
\end{split}
\end{equation*}
Without loss of generality, we can choose $B$ small enough so that $\kappa>0$ and $\kappa\m^{1+\b}$ is greater than the $\O$-term for all values of $\mu\in(0,\m_*)$; this is possible as $p<\min\{2s',r\}$ and $\mu_*<\infty$ is fixed. We get the desired result: 

\begin{equation}\label{beautybound}
I_\m< \ - \kappa\mu^{1+\b}\,.
\end{equation}
\end{proof}
\begin{remark}
From here on, we assume to have picked a constant $\kappa>0$ as described in the last proposition. It is important to note that if we replace $\mu_*$ by a lower upper bound $\mu_*'<\mu_*$, then \eqref{beautybound} would still hold for the same $\kappa$, as $(0,\mu_*')\subset(0,\mu_*)$. This allows us to later assume $\mu_*$ to be `sufficiently' small, without having to worry about the effect on $\kappa$. Similarly, the implicit constants in Prop. \ref{complexbound} will also remain fixed when lowering $\m_*$.
\end{remark}

\section{Near minimizers}
A consequence of the preceding proposition is that the feasible region $U_\mu=\{u\in H^{\frac{s}{2}}:Q(u)=\mu\}$ of the the minimization problem \eqref{problem} contains elements $u$ satisfying
\begin{align*}
 \e(u)< - \kappa\mu^{1+\b},\qqquad \text{with }\b=\frac{s'p}{2s'-p},
\end{align*}
where $\kappa$ is some fixed positive constant independent of $\mu\in(0,\mu_*)$.
We will refer such functions as \textit{near minimizers}. Only these functions are of interest to us; any minimizing sequence $(u_k)\subset U_\mu$ must consist solely of near minimizers, except for a finite number of exceptions. Proposition \ref{complexbound} will give important bounds of such functions, that will serve as the main building blocks for excluding vanishing and dichotomy.  We stress that throughout this paper, the implicit constants associated with our usage of $\lesssim,\gtrsim$ and $\simeq$ are independent of $\mu\in(0,\m_*)$.
\begin{proposition}\label{complexbound}
A near minimizer $u\in U_\mu$ satisfies
\begin{align}
\label{equalbounds}
    \l(u)\simeq\n(u)\simeq\norm{u}{2+p}^{2+p}&\simeq\mu^{1+\beta}, \\
    \label{rapiddecay}
    \n_r(u) &= o(\mu^{1+\beta}),\\
    \label{hormbound}
\horm{u}^2&\simeq\mu.
\end{align}
\end{proposition}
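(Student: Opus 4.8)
The plan is to bootstrap from the upper bound $\e(u) < -\kappa\mu^{1+\beta}$ and the constraint $\q(u) = \mu$. First I would write $\e(u) = \l(u) - \n_p(u) - \n_r(u)$ and use Prop.~\ref{basicbounds}: since $\l(u) > 0$ and $|\n(u)| \lesssim \mu$, the inequality $\e(u) < -\kappa\mu^{1+\beta}$ forces $\n_p(u) + \n_r(u) > \kappa\mu^{1+\beta}$, hence (because $|\n_r(u)| \lesssim \norm{u}{2+r}^{2+r} + \norm{u}{2+p}^{2+p}$ and $r > p$, so $\n_r$ is controlled by $\n_p$-type quantities) one gets $\n_p(u) \gtrsim \mu^{1+\beta}$, and by Prop.~\ref{basicbounds}$(iii)$ together with the homogeneity of $n_p$ this gives $\norm{u}{2+p}^{2+p} \gtrsim \mu^{1+\beta}$. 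The key tool for the matching \emph{upper} bounds is a Gagliardo--Nirenberg interpolation estimate of the shape $\norm{u}{2+p}^{2+p} \lesssim \horm{u}^{a}\,\norm{u}{2}^{b}$ for suitable exponents $a,b$ dictated by the embedding $H^{s/2}\hookrightarrow L^{2+p}$ (here $s > p/(2+p)$ is exactly what makes this embedding available), combined with the control $\horm{u}^2 \lesssim \q(u) + \l(u)$ coming from the growth bound $m(\xi) \gtrsim \cop{\xi}^s$ for $|\xi|>1$ and $m(\xi) > 0$.

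The heart of the argument is then a self-improving inequality. Inserting the lower bound on $\norm{u}{2+p}^{2+p}$ and the Gagliardo--Nirenberg bound into $\l(u) = \e(u) + \n(u) < \n(u) \lesssim \norm{u}{2+p}^{2+p} \lesssim \horm{u}^a \mu^{b/2} \lesssim (\mu + \l(u))^{a/2}\mu^{b/2}$, and tracking that the relevant exponent $a/2 < 1$ (which is where the condition $s' > p/2$, and correspondingly the value $\beta = s'p/(2s'-p)$, enters through the definition of near minimizer and the scaling in Prop.~\ref{sizeImu}), one can absorb and solve to obtain $\l(u) \lesssim \mu^{1+\beta}$ and $\horm{u}^2 \lesssim \mu$. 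Feeding these back gives $\norm{u}{2+p}^{2+p} \lesssim \mu^{1+\beta}$ and $\n(u) \lesssim \mu^{1+\beta}$, so all four quantities $\l(u)$, $\n(u)$, $\norm{u}{2+p}^{2+p}$, $\mu^{1+\beta}$ are comparable, which is \eqref{equalbounds}. The lower bound $\horm{u}^2 \gtrsim \mu$ in \eqref{hormbound} is immediate from $\horm{u}^2 \geq \norm{u}{2}^2 = 2\mu$.

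For \eqref{rapiddecay}, once $\horm{u}^2 \simeq \mu$ is known, I would split $\n_r(u)$ using $|N_r(x)| \lesssim \min\{|x|^{2+r}, |x|^{2+p}\}$: on the region $\{|u|\le 1\}$ bound $N_r(u)$ by $|u|^{2+r}$ and interpolate, using that $2+r > 2+p$ makes the resulting power of $\mu$ strictly larger than $\mu^{1+\beta}$; on $\{|u| > 1\}$ use that this set has small measure (controlled by $\norm{u}{2}^2 = 2\mu$ via Chebyshev) together with the $L^\infty$-type control from $\horm{u} \simeq \sqrt\mu$ being small — this is precisely why we only need small $\mu$. Both pieces are $o(\mu^{1+\beta})$ as $\mu \to 0$.

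I expect the main obstacle to be the bookkeeping of exponents in the self-improving inequality: one must verify that the power of $\horm{u}$ produced by Gagliardo--Nirenberg is genuinely subquadratic (so absorption works) and that the resulting power of $\mu$ closes up to exactly $1+\beta$ with $\beta = s'p/(2s'-p)$, which forces a careful reconciliation between the $L^{2+p}$-interpolation exponents, the $|\xi|>1$ growth rate $s$, and the near-minimizer threshold inherited from the long-wave ansatz in Prop.~\ref{sizeImu}. A secondary subtlety is handling $\n_r$ in the lower-bound step without circularity — one wants the lower bound on $\n_p(u)$ before $\horm{u} \simeq \sqrt\mu$ is available — which is why the estimate $|\n_r(u+v)| \lesssim \norm{u}{2+r}^{2+r} + \norm{v}{2+p}^{2+p}$ of Prop.~\ref{basicbounds}$(iv)$, with the trivial splitting $u = u + 0$, is the right tool: it bounds $\n_r$ by the same $L^{2+p}$-norm that already appears, with a gain from $r>p$.
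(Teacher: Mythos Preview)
There is a genuine gap in the self-improving step. The exponent $\beta = s'p/(2s'-p)$ depends on $s'$, the low-frequency growth rate of $m$, whereas the Gagliardo--Nirenberg inequality you invoke, $\norm{u}{2+p}^{2+p} \lesssim \horm{u}^{a}\norm{u}{2}^{b}$, only sees the $H^{s/2}$-norm and hence only the parameter $s$. Concretely, the optimal exponents are $a = p/s$ and $b = 2+p-p/s$; using $\horm{u}^2 \simeq \q(u)+\l(u)$ together with the a priori bound $\l(u) < \n(u) \lesssim \mu$ then closes your inequality only to $\norm{u}{2+p}^{2+p} \lesssim \mu^{1+p/2}$. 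But one always has $\beta > p/2$, so this upper bound is strictly weaker than the lower bound $\mu^{1+\beta}$: you obtain $\mu^{1+\beta} \lesssim \norm{u}{2+p}^{2+p} \lesssim \mu^{1+p/2}$, not the comparability \eqref{equalbounds}. The condition $s' > p/2$ does \emph{not} enter through the $H^{s/2}$-interpolation as you suggest; it enters only through the behaviour of $m$ near the origin. The paper supplies exactly this missing ingredient by splitting $u = u_1 + u_2$ in frequency at $|\xi|=1$. On the low-frequency piece one uses Gagliardo--Nirenberg with the homogeneous $\dot H^{s'/2}$-seminorm, controlled by $\l(u)$ via $m(\xi)\simeq|\xi|^{s'}$ for $|\xi|<1$; on the high-frequency piece the full Sobolev embedding gives $\norm{u_2}{2+p}^{2+p}\lesssim \l(u)^{1+p/2}$. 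Combining and using the algebraic identity $1+\tfrac{p}{2}-\tfrac{p}{2s'} = (1-\tfrac{p}{2s'})(1+\beta)$ yields $\norm{u}{2+p}^{2+p} \lesssim \l(u)^{p/(2s')}[\mu^{1+\beta}]^{1-p/(2s')}$, and it is the exponent $p/(2s')<1$, not $p/(2s)$, that closes the loop to $\l(u)\simeq\norm{u}{2+p}^{2+p}\simeq\mu^{1+\beta}$.

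The same missing frequency decomposition undermines your argument for \eqref{rapiddecay}. You appeal to ``$L^\infty$-type control from $\horm{u}\simeq\sqrt{\mu}$'', but there is no embedding $H^{s/2}\hookrightarrow L^\infty$ when $s\leq 1$, which is precisely the regime of interest here. The paper instead observes that the low-frequency piece $u_1$ has compactly supported Fourier transform, so $\norm{u_1}{\infty}^2\leq\norm{\widehat{u_1}}{1}^2\lesssim\mu$ directly, and then applies Prop.~\ref{basicbounds}(iv) with the decomposition $u=u_1+u_2$ (not the trivial $u=u+0$) to get $|\n_r(u)|\lesssim\norm{u_1}{2+r}^{2+r}+\norm{u_2}{2+p}^{2+p}$, where each summand is $o(\mu^{1+\beta})$ for separate reasons.
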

\begin{proof}
\textit{Obtaining the bounds \eqref{equalbounds}.}
As $\l > 0$, we immediately get from the definition of a near minimizer that
\begin{align}\label{obviousinequality}
  \max\{\l(u),\mu^{1+\beta}\}\lesssim\n(u)\lesssim\norm{u}{2+p}^{2+p}, 
\end{align}
 where the last inequality follows from Prop. \ref{basicbounds}. It remains to show $\norm{u}{2+p}^{2+p}\lesssim\min\{\l(u),\mu^{1+\beta}\}$. Let the indicator function on $[-1,1]$ be denoted $\chi$ and partition $u=u_1+u_2$ with $\widehat{u_1}= \chi\hat{u}$ and $\widehat{u_2}= (1-\chi)\hat{u}$.  By the Gagliardo--Nirenberg interpolation inequality,
\begin{align}\label{fundamentalu1}
    \norm{u_1}{2+p}^{2+p}\lesssim \norm{u_1}{\dot{H}^{\frac{s'}{2}}}^{\frac{p}{s'}}\norm{u_1}{2}^{2+p-\frac{p}{s'}}\lesssim\l(u)^{\frac{p}{2s'}}\mu^{1+\frac{p}{2} - \frac{p}{2s'}}.
\end{align}
For $u_2$, we use Sobolev embedding to obtain
\begin{align}\label{fundamentalu2}
    \norm{u_2}{2+p}^{2+p}\lesssim\norm{u_2}{H^{\frac{s}{2}}}^{2+p}\lesssim\l(u)^{1+\frac{p}{2}}.
\end{align}
As $\l(u)\lesssim\n(u)$, and $\n(u)\lesssim\m$ by $(ii)$ in Prop. \ref{basicbounds},  the expression \eqref{fundamentalu2} can be reduced further to
\begin{align}\label{u2finished}
\norm{u_2}{2+p}^{2+p}\lesssim\l(u)^{\frac{p}{2s'}}\mu^{1+\frac{p}{2} - \frac{p}{2s'}}.    
\end{align}
Exploiting the connection $1+\frac{p}{2} - \frac{p}{2s'} = (1-\frac{p}{2s'})(1+\b)$, we combine inequality \eqref{fundamentalu1} and \eqref{u2finished} to obtain
\begin{align}\label{ufundamental}
    \norm{u}{2+p}^{2+p}\lesssim\norm{u_1}{2+p}^{2+p}+\norm{u_2}{2+p}^{2+p}\lesssim\l(u)^{\frac{p}{2s'}}\bok\mu^{1+\b}\bck^{1-\frac{p}{2s'}}.
\end{align}
Combining \eqref{obviousinequality} with \eqref{ufundamental}, we conclude that $\norm{u}{2+p}^{2+p}\lesssim\min\{\l(u),\m^{1+\b}\}$.

\textit{Obtaining the bound \eqref{rapiddecay}.}
Now that \eqref{equalbounds} is established, we get $\norm{u_1}{2+p}^{2+p}\lesssim\m^{1+\beta}$ by \eqref{fundamentalu1}. Moreover, $\norm{u_1}{\infty}^2\leq\norm{\widehat{u_1}}{1}^2\leq4\mu$, and so
\begin{align*}
    \norm{u_1}{2+r}^{2+r}\leq \norm{u_1}{2+p}^{2+p}\norm{u_1}{\infty}^{r-p}\lesssim\mu^{1+\beta + (r-p)/2}.
\end{align*}
Looking back at \eqref{fundamentalu2}, we also obtain $\norm{u_2}{2+p}^{2+p}\lesssim\mu^{(1+\frac{p}{2})(1+\beta)}$.
Finally, by $(iv)$ in Prop. \ref{basicbounds},
\begin{align*}
    |\n_r(u)|\lesssim \norm{u_1}{2+r}^{2+r} + \norm{u_2}{2+p}^{2+p} = o(\mu^{1+\beta}).
\end{align*}

\textit{Obtaining the bound \eqref{hormbound}.}
This is also a consequence of \eqref{equalbounds} together with $\horm{\cdot}^2\simeq\q(\cdot)+\l(\cdot)$ and the fact that the upper bound $\mu_*$ is fixed.
\end{proof}

\section{A congestion result for near minimizers}\label{vanishing}
In this section, we show that a minimizing sequence $(u_k)$ of \eqref{problem} will never \textit{vanish} in accordance with the Concentration-Compactness Theorem \ref{concentration}. We start by demonstrating some `uniform' congestion of mass in $L^{2+p}$-norm of each element in $(u_k)$. To formalize, we pick a smooth function $\varphi$, satisfying  $\supp(\varphi)\subset[-1,1]$ and $\sum_{j\in\mathbb{Z}}\varphi(x-j)=1$. An example would be the convolution of the characteristic function on $[-\frac{1}{2},\frac{1}{2}]$ with a mollifier supported in $[-\frac{1}{4},\frac{1}{4}]$. For brevity, we set $\varphi_j(x)=\varphi(x-j)$.
\begin{proposition}\label{congestion}
For any near minimizer $u\in U_\mu$ we have
\begin{align*}
    \max_{j\in\mathbb{Z}}\norm{\varphi_j u}{2+p}\gtrsim\mu^{\frac{\beta}{p}}.
\end{align*}
\end{proposition}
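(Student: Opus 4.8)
Set $M:=\max_{j\in\mathbb Z}\norm{\varphi_{j}u}{2+p}$, which is finite and positive since $u\in H^{\frac s2}\hookrightarrow L^{2+p}$. The plan is to establish the single inequality $\norm{u}{2+p}^{2+p}\lesssim M^{p}\horm{u}^{2}$ and then read off the statement from the size estimates $\norm{u}{2+p}^{2+p}\simeq\mu^{1+\beta}$ and $\horm{u}^{2}\simeq\mu$ furnished by Proposition \ref{complexbound}. To get that inequality I would first localise: since $\sum_{j}\varphi_{j}\equiv1$ with $0\le\varphi_{j}\le1$ and each $\varphi_{j}$ supported in an interval of length at most $2$, the supports have bounded overlap and at every $x$ some $\varphi_{j}(x)$ exceeds a fixed constant, so $|u(x)|^{2+p}\lesssim\sum_{j}|\varphi_{j}(x)u(x)|^{2+p}$ pointwise and hence $\norm{u}{2+p}^{2+p}\lesssim\sum_{j}\norm{\varphi_{j}u}{2+p}^{2+p}$. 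For each $j$ one peels off a factor of the maximum, $\norm{\varphi_{j}u}{2+p}^{2+p}=\norm{\varphi_{j}u}{2+p}^{p}\norm{\varphi_{j}u}{2+p}^{2}\le M^{p}\norm{\varphi_{j}u}{2+p}^{2}$, and, using that $s>p/(2+p)$ gives the embedding $H^{\frac s2}(\R)\hookrightarrow L^{2+p}(\R)$, bounds $\norm{\varphi_{j}u}{2+p}^{2}\lesssim\horm{\varphi_{j}u}^{2}$. Summing in $j$ then reduces everything to the almost‑orthogonality bound $\sum_{j}\horm{\varphi_{j}u}^{2}\lesssim\horm{u}^{2}$.

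The hard part is precisely this last bound, but it is a standard property of a fixed smooth partition of unity with bounded overlap, which I would use as such (or cite). For $\frac s2<1$ it follows directly from the Gagliardo form of the norm: decomposing $\varphi_{j}(x)u(x)-\varphi_{j}(y)u(y)=\varphi_{j}(x)\bigl(u(x)-u(y)\bigr)+u(y)\bigl(\varphi_{j}(x)-\varphi_{j}(y)\bigr)$ and summing in $j$, the first term is controlled by the Gagliardo seminorm of $u$ through $\sum_{j}\varphi_{j}(x)^{2}\le1$, while the second is controlled by $\norm{u}{2}^{2}$ because $\sum_{j}|\varphi_{j}(x)-\varphi_{j}(y)|^{2}\lesssim\min\{1,|x-y|^{2}\}$ (the Lipschitz bound for $\varphi$ together with bounded overlap) and $\int_{\R}\min\{1,|z|^{2}\}|z|^{-1-s}\,dz<\infty$; larger $s$ is reduced to this case by the Leibniz rule and interpolation. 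Equivalently, translating and multiplying by the fixed Schwartz function $\varphi$ is uniformly bounded on $H^{\frac s2}$ and the square‑sum is absorbed by the bounded overlap.

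Combining the above with Proposition \ref{complexbound} gives
\[
\mu^{1+\beta}\simeq\norm{u}{2+p}^{2+p}\lesssim M^{p}\horm{u}^{2}\simeq M^{p}\mu ,
\]
whence $M^{p}\gtrsim\mu^{\beta}$, that is $\max_{j\in\mathbb Z}\norm{\varphi_{j}u}{2+p}\gtrsim\mu^{\beta/p}$. The only hypotheses used are $s>p/(2+p)$, for the Sobolev embedding, and Proposition \ref{complexbound} for the two size estimates; in particular the near‑minimizer assumption enters only through the latter.
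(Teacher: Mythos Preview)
Your proof is correct and follows essentially the same approach as the paper: both rely on the almost-orthogonality bound $\sum_{j}\horm{\varphi_{j}u}^{2}\lesssim\horm{u}^{2}$, the localisation $\norm{u}{2+p}^{2+p}\simeq\sum_{j}\norm{\varphi_{j}u}{2+p}^{2+p}$, the Sobolev embedding $\norm{\varphi_{j}u}{2+p}^{2}\lesssim\horm{\varphi_{j}u}^{2}$, and the size estimates from Proposition~\ref{complexbound}. The only cosmetic difference is that the paper combines the two sums and then pigeonholes to a single index $j_{0}$ before applying the embedding, whereas you peel off $M^{p}$ and apply the embedding termwise before summing; the ingredients and the logic are the same.
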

\begin{proof}
Consider the operator $T\colon f\mapsto(\varphi_jf)_j$, mapping functions to sequences of functions. It is a fact that $\norm{T}{H^{\alpha}\to\ell^2(H^{\alpha})}<\infty$ for all $\alpha\geq0$; this is a trivial calculation when $\alpha\in\mathbb{N}_0$ if one replaces $\norm{\cdot}{H^\alpha}$ with the equivalent norm $f\mapsto\norm{f}{2}+\norm{f^{(\alpha)}}{2}$. For non-integer values of $\alpha>0$, the result follows immediately from the (so called) `complex interpolation method'; in particular, the two results \cite[Theorem~5.1.2. on p. 107]{interpolation} and \cite[Theorem~6.4.5.(7) on p. 152]{interpolation} combined with the boundness of $T$ for $\alpha\in\mathbb{N}_0$, implies the general bound. Setting $\alpha=s/2$, we conclude
\begin{align}\label{boundnessOfT}
    \sum_{j\in\mathbb{Z}}\horm{\varphi_j u}^2\lesssim\horm{u}^2.
\end{align}
By \eqref{hormbound} and \eqref{equalbounds} we also obtain 
\begin{align}\label{equivalenceOfExpressions}
    \m^{\b}\horm{u}^2\simeq\norm{u}{2+p}^{2+p}\simeq\sum_{j\in\mathbb{Z}}\norm{\varphi_j u}{2+p}^{2+p},
\end{align}
where the last equivalence uses $\sum_{j\in\mathbb{Z}}|\varphi_j(x)|^{2+p}\simeq 1$.
Combining \eqref{boundnessOfT} and \eqref{equivalenceOfExpressions}, we get
\begin{align*}
\mu^\b\sum_{j\in\mathbb{Z}}\horm{\varphi_j u}^2\leq C\sum_{j\in\mathbb{Z}}\norm{\varphi_j u}{2+p}^{2+p},    
\end{align*}
for some $C<\infty$ independent of our choice of near minimizer $u$. At least one $j_0\in\mathbb{Z}$ must then satisfy 
\begin{align}\label{summandinequality}
    \mu^\b\horm{\varphi_{j_0} u}^2\leq C\norm{\varphi_{j_0} u}{2+p}^{2+p}.
\end{align}
Combining \eqref{summandinequality} with the Sobolev embedding, $\norm{\varphi_{j_0} u}{2+p}^2\lesssim\horm{\varphi_{j_0} u}^2$, we are done.
\end{proof}
To exclude vanishing we would need congestion of mass in $L^2$-norm; this is achievable from the previous result through the Gagliardo--Nirenberg inequality inequality. Indeed, setting $j_0=\argmax_{j\in\mathbb{Z}}\norm{\varphi_j u}{2+p}$ we obtain
 \begin{align}\label{congestionfromgaglierdo}
    \norm{\varphi_{j_0}u}{2+p}^{2+p}\lesssim\norm{\varphi_{j_0}u}{\dot{H}^{\frac{s}{2}}}^{\frac{p}{s}}\norm{\varphi_{j_0}u}{2}^{2+p-\frac{p}{s}}.
\end{align}
By the boundness of $T$ in the previous proof, and \eqref{hormbound}, we have 
the estimate $\norm{\varphi_{j_0}u}{\dot{H}^{\frac{s}{2}}}^2\lesssim\mu$; together with the previous proposition, equation \eqref{congestionfromgaglierdo} now implies
 \begin{align*}
    \mu^{\frac{\beta}{p}(2+p)}\lesssim\mu^{\frac{p}{2s}}\norm{\varphi_{j_0}u}{2}^{2+p-\frac{p}{s}}.
\end{align*}
As $2+p-p/s>0$, we conclude that $\mu^{\delta}\lesssim \norm{\varphi_{j_0}u}{2}$, for some appropriate exponent $\delta>0$, and so we get the following corollary.
\begin{corollary}
No minimizing sequence of \eqref{problem} has a subsequence for which \textit{vanishing} occurs in accordance with Theorem \ref{concentration}.
\end{corollary}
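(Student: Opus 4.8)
The plan is to turn the localized $L^2$-mass bound just obtained into a direct contradiction with the \emph{vanishing} alternative of Theorem~\ref{concentration}. Given a minimizing sequence $(u_k)\subset U_\mu$ of \eqref{problem}, I would pass to the densities $\rho_k:=\tfrac12 u_k^2\in L^1$, which are non-negative with $\int_\R\rho_k\,dx=\q(u_k)=\mu$, so that Theorem~\ref{concentration} applies to $(\rho_k)$ with this very $\mu$. Since $I_\mu>-\infty$ and $I_\mu<-\kappa\mu^{1+\beta}$ (Prop.~\ref{sizeImu}) while $\e(u_k)\to I_\mu$, only finitely many $u_k$ can fail to satisfy $\e(u_k)<-\kappa\mu^{1+\beta}$; discarding these harmless exceptions, I may assume every $u_k$ is a near minimizer. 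For each $k$, Proposition~\ref{congestion} then supplies an index $j_0=j_0(k)\in\mathbb{Z}$ realizing $\max_j\norm{\varphi_j u_k}{2+p}$, and the Gagliardo--Nirenberg estimate \eqref{congestionfromgaglierdo} combined with $\norm{\varphi_{j_0}u_k}{\dot{H}^{\frac{s}{2}}}^2\lesssim\mu$ (boundedness of the operator $T$ from the proof of Prop.~\ref{congestion}, together with \eqref{hormbound}) yields $\mu^{\delta}\lesssim\norm{\varphi_{j_0}u_k}{2}$, with $\delta>0$ the exponent identified just above and with implicit constant independent of $k$.

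Next I would localize in space. Since $0\le\varphi\le1$ and $\supp\varphi\subset[-1,1]$, we have $\varphi_{j_0}^2\le 1$ with $\supp\varphi_{j_0}\subset[j_0-1,j_0+1]$, hence
\[
\int_{j_0-1}^{\,j_0+1}\rho_k\,dx\;\ge\;\tfrac12\int_\R\varphi_{j_0}^2 u_k^2\,dx\;=\;\tfrac12\norm{\varphi_{j_0}u_k}{2}^2\;\gtrsim\;\mu^{2\delta}.
\]
Choosing $r=2$ and $x_0=-j_0$ in the vanishing alternative of Theorem~\ref{concentration}, the window $[-r-x_0,\,r-x_0]=[j_0-2,\,j_0+2]$ contains $[j_0-1,j_0+1]$, whence
\[
\sup_{x_0\in\R}\int_{-2}^{2}\rho_k(x-x_0)\,dx\;\ge\;\int_{j_0-1}^{\,j_0+1}\rho_k\,dx\;\gtrsim\;\mu^{2\delta}\;>\;0
\]
for all $k$, with a lower bound independent of $k$. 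This is incompatible with the requirement, in the vanishing alternative, that $\sup_{x_0}\int_{-r}^{r}\rho_k(x-x_0)\,dx\to0$ as $k\to\infty$ for \emph{every} $r>0$; already the single choice $r=2$ fails. Hence no subsequence of $(\rho_k)$ — equivalently, no subsequence of the minimizing sequence $(u_k)$ — can vanish.

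The point needing care is not analytic: the substance is Proposition~\ref{congestion} and the interpolation estimate \eqref{congestionfromgaglierdo}, both already established, so the corollary is short bookkeeping. What does require a modicum of attention is (a) observing that the non-near-minimizing terms of a minimizing sequence are finite in number, so that passing to a subsequence is unaffected, and (b) matching the translation convention in Theorem~\ref{concentration} so that a lower bound on $L^2$-mass inside one fixed bounded window (here of radius $2$) genuinely contradicts vanishing — for which it suffices to exhibit a single admissible value of $r$.
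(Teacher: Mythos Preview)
Your proposal is correct and follows the same route as the paper: the paper's argument for the corollary is precisely the paragraph preceding it, which extracts the $L^2$-congestion bound $\mu^{\delta}\lesssim\norm{\varphi_{j_0}u_k}{2}$ from Proposition~\ref{congestion} via the Gagliardo--Nirenberg inequality~\eqref{congestionfromgaglierdo} and the $H^{s/2}$-bound~\eqref{hormbound}, and you have simply spelled out the remaining bookkeeping (identifying $\rho_k=\tfrac12 u_k^2$, matching the translation window in Theorem~\ref{concentration}, and discarding the finitely many non-near-minimizers). One cosmetic remark: you use $0\le\varphi\le1$, which the paper does not state explicitly for the partition function introduced before Proposition~\ref{congestion}; this is harmless, since the example given there (convolution of $\mathbf{1}_{[-1/2,1/2]}$ with a narrow mollifier) satisfies it, and in any case replacing the bound by $\|\varphi\|_\infty^2$ changes nothing.
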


\section{Strict subadditivity of the mapping \texorpdfstring{$\m\mapsto I_{\m}$}{TEXT} }
Excluding \textit{dichotomy} from a minimizing sequence is a more difficult task than that of vanishing, reflected by the laborious calculations in this subsection. The main idea however, is a simple one: Suppose dichotomy (as described in Theorem \ref{concentration}) occurs on a minimizing sequence $(u_k)\subset U_\mu$ of \eqref{problem}, then we shall see it can be `split' in two $(u_k^1)\subset U_\lambda$, $(u_k^2)\subset U_{\mu-\lambda}$ so that $\lim_{k\to\infty}\e(u_k^1)+\e(u_k^2)=I_{\mu}$. This will contradict that the mapping $\mu\mapsto I_\mu$ is strictly subadditive for small $\mu$, a fact we now prove.

\begin{proposition}\label{subadditive}
For $\mu_*>0$ sufficiently small, the mapping $\m\mapsto I_\m$ is strictly subadditive on $(0,\mu_*)$, that is,
\begin{align*}
    I_{\m_1+\m_2}<I_{\m_1}+I_{\m_2},
\end{align*}
for $\m_1,\mu_2>0$ satisfying $\mu_1+\m_2< \m_*$.
\end{proposition}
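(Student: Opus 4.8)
The plan is to exploit the scaling behaviour of the upper bound from Proposition~\ref{sizeImu}: we know $I_\m < -\k\m^{1+\b}$, and the key point is that the function $\m\mapsto \m^{1+\b}$ is \emph{strictly superadditive} (since $1+\b > 1$), while $\m\mapsto I_\m$ should roughly behave like $-\text{const}\cdot\m^{1+\b}$ from \emph{below} as well, at least up to a controllable error. Concretely, I would first establish a matching scaling \emph{lower} estimate of the form $I_\m \gtrsim -\m^{1+\b}$ for $\m\in(0,\m_*)$, or rather the slightly stronger statement that for any near minimizer (or for near-minimizing sequences) one has $\e(u)\gtrsim -\m^{1+\b}$ — but in fact this is essentially already contained in Proposition~\ref{complexbound}, since $\e(u) = \l(u)-\n(u)$ with $\l(u)\simeq\n(u)\simeq\m^{1+\b}$, giving $|I_\m|\simeq\m^{1+\b}$. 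So the real content is a homogeneity-type comparison.

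The cleanest route is the standard concentration-compactness sub-additivity argument via rescaling the constraint. Given $\m_1,\m_2>0$ with $\m_1+\m_2<\m_*$, pick $u\in U_{\m_1+\m_2}$ a near minimizer with $\e(u)$ close to $I_{\m_1+\m_2}$, and for $\theta\in(0,1)$ consider $u_\theta := \sqrt{\theta}\,u \in U_{\theta(\m_1+\m_2)}$. The functionals scale as $\q(u_\theta)=\theta\q(u)$, $\l(u_\theta)=\theta\l(u)$, while $\n_p(u_\theta)=\theta^{1+p/2}\n_p(u)$ and $\n_r(u_\theta)$ is a genuinely higher-order correction controlled by \eqref{rapiddecay}. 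Hence
\begin{equation*}
\e(u_\theta) = \theta\l(u) - \theta^{1+p/2}\n_p(u) - \n_r(u_\theta) \leq \theta\,\e(u) - (\theta^{1+p/2}-\theta)\n_p(u) + o(\m^{1+\b}),
\end{equation*}
and since $\theta<1$ and $1+p/2>1$ we have $\theta^{1+p/2}-\theta<0$, i.e. $\theta^{1+p/2}-\theta = \theta(\theta^{p/2}-1)<0$; combined with $\n_p(u)\simeq\m^{1+\b}>0$ this forces $\e(u_\theta) \leq \theta\,\e(u) - c\,\m^{1+\b}$ for a strictly positive gain $c=c(\theta)$, modulo the $o(\m^{1+\b})$ remainder which for $\m_*$ small enough is swallowed by $c\,\m^{1+\b}$. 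Taking the infimum gives $I_{\theta\m} \leq \theta\, I_\m - c(\theta)\m^{1+\b}$ for $\m<\m_*$. Applying this with $\m=\m_1+\m_2$ and $\theta = \m_1/(\m_1+\m_2)$, then with $\theta=\m_2/(\m_1+\m_2)$, and adding the two resulting inequalities yields $I_{\m_1}+I_{\m_2} \leq I_{\m_1+\m_2} - c'\,(\m_1+\m_2)^{1+\b}$ with $c'>0$, which is exactly the desired strict sub-additivity (indeed with a quantitative gap).

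A few details need care. First, the scaling argument above requires that the chosen near minimizer $u$ actually satisfy the bounds of Proposition~\ref{complexbound} — this is fine, as those bounds hold for \emph{all} near minimizers in $U_{\m_1+\m_2}$, and by Proposition~\ref{sizeImu} such near minimizers exist. Second, one must check the remainder $\n_r(u_\theta)$: writing $u=u_1+u_2$ as in the proof of Proposition~\ref{complexbound} and using $u_\theta = \sqrt\theta u_1 + \sqrt\theta u_2$ with $(iv)$ of Proposition~\ref{basicbounds}, one gets $|\n_r(u_\theta)|\lesssim \theta^{1+r/2}\norm{u_1}{2+r}^{2+r} + \theta^{1+p/2}\norm{u_2}{2+p}^{2+p}$, and by the estimates derived inside that proof both terms are $o(\m^{1+\b})$ uniformly for $\theta\in(0,1)$ — crucially with implicit constants independent of $\m\in(0,\m_*)$, which is why the Remark after Proposition~\ref{sizeImu} about freely shrinking $\m_*$ is invoked. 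Third, one should make the dependence of the gain $c(\theta)$ explicit enough (it degenerates as $\theta\to 0$ or $\theta\to 1$, but for the final step $\theta$ and $1-\theta$ are both bounded away from $0$ only if $\m_1\simeq\m_2$; in general one of them may be tiny, so the cleanest bookkeeping is to keep the inequality $I_{\theta\m}\le\theta I_\m - c(\theta)\m^{1+\b}$ with $c(\theta)=\kappa(\theta-\theta^{1+p/2})$ roughly, derived from $\e(u)<-\kappa\m^{1+\b}$ and $\n_p(u)\gtrsim\m^{1+\b}$, noting $\theta-\theta^{1+p/2}\ge \theta(1-\theta)\cdot\text{const}$ is not quite what we want — better to bound $c(\theta)\gtrsim \min\{\theta,1-\theta\}\,\theta$ and then after summing over $\theta=\m_i/(\m_1+\m_2)$ use $\m_1^{1+\b}+\m_2^{1+\b} < (\m_1+\m_2)^{1+\b}$ with a gap proportional to $\m_1\m_2(\m_1+\m_2)^{\b-1}$). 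The main obstacle is precisely this last bookkeeping: ensuring the quantitative gain beats the $o(\m^{1+\b})$ error \emph{uniformly} in the splitting ratio, which is handled by first fixing $\m_*$ small (using the Remark) so that the $o(\cdot)$ term is, say, at most half of $\kappa(\theta-\theta^{1+p/2})\m^{1+\b}$ for all $\theta\in(0,1)$ and all $\m<\m_*$ — this uniformity is the one genuinely delicate point, and it rests on the fact that the $o(\m^{1+\b})$ in \eqref{rapiddecay} has a rate independent of the near minimizer.
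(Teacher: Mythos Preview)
Your scaling computation contains a sign error that invalidates the argument. You correctly write
\[
\e(u_\theta) \;=\; \theta\,\e(u) \;-\; (\theta^{1+p/2}-\theta)\,\n_p(u) \;+\; \big[\theta\,\n_r(u)-\n_r(u_\theta)\big],
\]
and correctly observe that $\theta^{1+p/2}-\theta<0$ for $\theta\in(0,1)$ and $\n_p(u)\simeq\m^{1+\b}>0$. But then $-(\theta^{1+p/2}-\theta)\n_p(u)$ is \emph{positive}, not negative, so the conclusion is $\e(u_\theta)=\theta\,\e(u)+c(\theta)\m^{1+\b}+o(\m^{1+\b})$ with $c(\theta)>0$, not $\e(u_\theta)\le\theta\,\e(u)-c\,\m^{1+\b}$. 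Intuitively: scaling \emph{down} shrinks the favourable nonlinear term $\n_p$ faster than the quadratic $\l$, so $\e$ becomes \emph{less} negative, not more. Consequently the only inequality you can extract is $I_{\theta\m}\le \theta I_\m + c(\theta)\m^{1+\b}$, which is too weak (and in the wrong direction) to yield sub-additivity. Note also that the final inequality you write down, $I_{\m_1}+I_{\m_2}\le I_{\m_1+\m_2}-c'(\m_1+\m_2)^{1+\b}$, is the \emph{opposite} of sub-additivity, not ``exactly the desired'' statement.

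The underlying issue is the direction of scaling: to bound $I_{\m_1+\m_2}$ from above in terms of $I_{\m_1},I_{\m_2}$ you must produce test functions in $U_{\m_1+\m_2}$, not in the smaller sets $U_{\m_i}$. The paper therefore scales \emph{up}: for a near minimizer $u\in U_\m$ and $t\in[1,2]$ one has $\sqrt{t}\,u\in U_{t\m}$, and now $t^{1+p/2}-t>0$ gives the gain
\[
I_{t\m}\;\le\;\e(\sqrt{t}\,u)\;=\;t\,\e(u)\;-\;(t^{1+p/2}-t)\,\n(u)\;+\;\big[t^{1+p/2}\n_r(u)-\n_r(\sqrt{t}\,u)\big].
\]
The remainder is handled by the mean value theorem, which produces a crucial factor of $(t-1)$ matching the factor $(t-1)$ in the gain $(t^{1+p/2}-t)\n(u)\gtrsim(t-1)\m^{1+\b}$; this is exactly what resolves the uniformity worry you flagged at the end of your proposal. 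One obtains strict sub-homogeneity $I_{t\m}<tI_\m$ first for $t\in(1,2]$, then for all $t>1$ by iteration, and finally strict sub-additivity follows from sub-homogeneity by the elementary manipulation
\[
I_{\m_1+\m_2}\;<\;\Big(\tfrac{\m_1}{\m_2}+1\Big)I_{\m_2}\;=\;\tfrac{\m_1}{\m_2}\,I_{(\m_2/\m_1)\m_1}+I_{\m_2}\;\le\;I_{\m_1}+I_{\m_2},
\]
which sidesteps entirely the bookkeeping on the splitting ratio that you found delicate.
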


\begin{proof}
We begin by finding a $\m_*>0$ so that $\mu\mapsto I_\mu$ is strictly subhomogenous on $(0,\mu_*)$.
Pick a near minimizer $u\in U_\mu$ and $t\in[1,2]$. Notice that $\l(\sqrt{t}u) = t\l(u)$ and $\n_p(\sqrt{t}u) = t^{1+\frac{p}{2}}\n_p(u)$. As $\q(\sqrt{t}u)=t\mu$, we calculate
\begin{equation}\label{inequalitydichotomy}
\begin{split}
I_{t\m} &\leq \l(\sqrt{t}u) - \n(\sqrt{t}u)\\
&= t\l(u)-t^{1+\frac{p}{2}}\n(u) +  t^{1+\frac{p}{2}}\n_r(u)-\n_r(\sqrt{t}u)\\
 &=t\e(u) - \underbrace{[t^{1+\frac{p}{2}}-t]\n(u)}_{\let\scriptstyle\textstyle
    \substack{\\
    \varphi(t,u)}} + \underbrace{t^{1+\frac{p}{2}}\n_r(u) - \n_r(\sqrt{t}u)}_{\let\scriptstyle\textstyle
    \substack{\phi(t,u)}}
\end{split}
\end{equation}
By \eqref{equalbounds} we get $\varphi(t,u)\gtrsim(t-1)\mu^{1+\b}$, where we  exploited that $t^{1+\frac{p}{2}}-t\gtrsim t-1$, when $t\in[1,2]$.
As for $\phi$, we see that $\phi(1,u)=0$ and so we use the mean value theorem for some $t_*\in[1,t]$ (and Leibniz integral rule) to get
\begin{align*}
    \phi(t,u) &= (t-1)\frac{d\phi}{dt}(t_*,u)\\
    &=(t-1)\int_\R(1+\tfrac{p}{2})t_*^{\frac{p}{2}}N_r(u)-\frac{u}{2\sqrt{t_*}}n_r(\sqrt{t_*}u)\,dx.
\end{align*}
It should be clear that $u\mapsto\int_\R un_r(\sqrt{t}u)\,dx$ also satisfies an inequality of the form $(iv)$ in Prop. \ref{basicbounds}, uniformly in $t\in[1,2]$. This in turn means it satisfies an inequality of the form \eqref{rapiddecay} uniformly in $t\in[1,2]$. Thus the above calculation implies that $|\varphi(t,u)|=(t-1)o(\mu^{1+\b})$. 
These two bounds on $\varphi$ and $\phi$ implies we can pick $\mu_*>0$ small enough so that
\begin{align*}
    -\varphi(t,u)+\phi(t,u)\leq -\delta(t-1)\mu^{1+\b},
\end{align*}
is satisfied for some $\delta>0$, all $t\in[1,2]$ and all near minimizers $u\in U_\mu$ with $\mu\in(0,\mu_*)$. Assuming we have chosen such a $\mu_*>0$, then \eqref{inequalitydichotomy} becomes 
\begin{align*}
    I_{t\mu}\leq t\e(u)-\delta(t-1)\mu^{1+\b}.
\end{align*}
Picking a minimizing sequence $(u_k)\subset U_\mu$ and assuming $1<t\leq2$, this last inequality implies
\begin{align}\label{closebutnocigar}
    I_{t\mu}<tI_\mu,
\end{align}
on $(0,\mu_*)$. Finally, for a general $t>1$ and $\mu$ satisfying $t\mu\in(0,\mu_*)$, we can pick an integer $k>0$, so that $\sqrt[k]{t}\leq 2$, which combined with \eqref{closebutnocigar} implies
\begin{align*}
    I_{t\mu}<t^{\frac{1}{k}}I_{t^{1-\frac{1}{k}}\mu}<t^{\frac{2}{k}}I_{t^{1-\frac{2}{k}}\mu}<\dots<tI_\mu,
\end{align*}
that is, $\mu\mapsto I_\mu$ is strictly subhomogenous on $(0,\mu_*)$.
To show that strict subhomogeneity implies strict subadditivity, we assume without loss of generality that $0<\m_1\leq\m_2$ and $\m_1+\m_2<\mu_*$, and calculate
\begin{align*}
    I_{\m_1+\m_2}<\Big(\frac{\m_1}{\m_2}+1\Big)I_{\m_2}
    =\frac{\m_1}{\m_2}I_{\frac{\m_2}{\mu_1}\m_1} + I_{\m_2} 
    \leq I_{\m_1} + I_{\m_2}\,.
    \end{align*}
\end{proof}

Now that strict subadditivity of $\mu\mapsto I_\mu$ has been established, we shall create the contradiction as described at the beginning of this section. It will be essential that the non-local component of $\e$, namely $\l$, behaves almost like a local operator on sums of functions whose mass is `sufficiently' separated. It is exactly the regularity of $m$ that allows $\l$ to enjoy such a property. This result is encapsulated in the next lemma, which roughly states that the commutator operator $[L,\varphi(\cdot/r)]$ tends to zero as $r\to\infty$, for any Schwartz function $\varphi$. Here, the multiplication operator $f\mapsto\varphi f$ is defined for any distribution $f$ in the canonical sense. 

\begin{lemma}\label{zous}
For a Schwartz function $\varphi$, let $B_r\colon H^{\frac{s}{2}}\to H^{\frac{-s}{2}}$ be the commutator of the operators $L$ and $f\mapsto\varphi(\cdot/r) f$. Then
\begin{align*}
    \norm{B_r}{op}\to 0,\quad r\to\infty.
\end{align*}
\end{lemma}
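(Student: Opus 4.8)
The plan is to estimate the operator norm of $B_r$ directly on the Fourier side. Write $\varphi_r := \varphi(\cdot/r)$, so that $\widehat{\varphi_r}(\xi) = r\,\widehat{\varphi}(r\xi)$, a Schwartz function concentrated near the origin with unit $L^1$-mass. For $f \in H^{\frac{s}{2}}$ one has $\widehat{\varphi_r f} = \widehat{\varphi_r} * \hat f$, and consequently the commutator acts as
\begin{align*}
    \widehat{B_r f}(\xi) = \int_\R \bigl(m(\xi) - m(\eta)\bigr)\widehat{\varphi_r}(\xi - \eta)\,\hat f(\eta)\,d\eta.
\end{align*}
To bound $\norm{B_r f}{H^{-s/2}}$ I would apply the regularity estimate \eqref{moreconvenientbound}: since $|m(\xi) - m(\eta)| \le \omega(\xi - \eta)\cop{\xi}^{s/2}\cop{\eta}^{s/2}$, the kernel
\begin{align*}
    K_r(\xi,\eta) := \frac{\cop{\xi}^{-s/2}\bigl(m(\xi) - m(\eta)\bigr)\widehat{\varphi_r}(\xi - \eta)\cop{\eta}^{-s/2}}{1}
\end{align*}
is dominated by $\omega(\xi - \eta)\,|\widehat{\varphi_r}(\xi - \eta)|$, which depends only on $\xi - \eta$. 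Thus $\norm{B_r}{op} \le \norm{\cop{\cdot}^{-s/2}B_r\cop{\cdot}^{s/2}}{L^2\to L^2}$ is bounded, by Schur's test (or just Young's convolution inequality, since the dominating kernel is a convolution kernel), by $\|\omega\,\widehat{\varphi_r}\|_{L^1}$. It then remains to show $\|\omega\,\widehat{\varphi_r}\|_{L^1} \to 0$ as $r \to \infty$.

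For that last step I would split the integral at a fixed radius $\delta$. On $|\lambda| \le \delta$ we use $\omega(\lambda) \le \sup_{|\lambda|\le\delta}\omega(\lambda)$ together with $\|\widehat{\varphi_r}\|_{L^1} = \|\widehat{\varphi}\|_{L^1}$, giving a contribution $\lesssim \sup_{|\lambda|\le\delta}\omega(\lambda)$, which is small for $\delta$ small since $\omega(\lambda)\to 0$ as $\lambda \to 0$. On $|\lambda| > \delta$ we use that $\omega$ grows at most polynomially, say $\omega(\lambda) \lesssim \cop{\lambda}^M$, while $|\widehat{\varphi_r}(\lambda)| = r|\widehat{\varphi}(r\lambda)|$ decays faster than any polynomial; substituting $\lambda = \zeta/r$ shows $\int_{|\lambda|>\delta}\cop{\lambda}^M |\widehat{\varphi_r}(\lambda)|\,d\lambda = \int_{|\zeta|>r\delta}\cop{\zeta/r}^M|\widehat{\varphi}(\zeta)|\,d\zeta \to 0$ as $r\to\infty$ by dominated convergence (for $r\ge 1$ the integrand is bounded by $\cop{\zeta}^M|\widehat\varphi(\zeta)|\in L^1$, and the domain shrinks to a null set). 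Choosing $\delta$ small first and then $r$ large yields $\|\omega\,\widehat{\varphi_r}\|_{L^1}\to 0$, hence $\norm{B_r}{op}\to 0$.

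The only mildly delicate point is the reduction to a convolution bound: one must check that the weights $\cop{\xi}^{-s/2}$, $\cop{\eta}^{-s/2}$ exactly absorb the factor $\cop{\xi}^{s/2}\cop{\eta}^{s/2}$ produced by \eqref{moreconvenientbound}, so that the remaining kernel is genuinely a function of $\xi - \eta$ alone and Young's inequality applies cleanly; this is where the precise form of the regularity estimate \eqref{moreconvenientbound} — as opposed to the raw uniform-continuity hypothesis on $m(\xi)/\cop{\xi}^s$ — pays off. Everything else is a routine splitting argument, and I do not anticipate a substantive obstacle beyond keeping track of the fact that $\omega$ is only known to be polynomially bounded rather than bounded (which is precisely why the split at $\delta$, rather than a single global estimate, is needed).
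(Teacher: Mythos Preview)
Your proof is correct and is essentially the same as the paper's: both pass to the Fourier side, invoke the regularity estimate \eqref{moreconvenientbound} so that the weights $\cop{\xi}^{s/2}\cop{\eta}^{s/2}$ are exactly cancelled, and reduce the operator norm to the scalar quantity $\int_\R |\widehat{\varphi_r}(\lambda)|\,\omega(\lambda)\,d\lambda$. The paper carries this out via the duality pairing $\langle B_r u,v\rangle$ rather than through a kernel/Young's-inequality formulation, but after your substitution $\lambda=\zeta/r$ one lands on precisely the paper's integral $\int_\R |\hat\varphi(\zeta)|\,\omega(\zeta/r)\,d\zeta$, to which dominated convergence applies directly (dominant $|\hat\varphi(\zeta)|\cop{\zeta}^M$); your $\delta$-splitting is a valid but slightly longer route to the same conclusion.
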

\begin{proof}
Set $\varphi_r=\varphi(\cdot/r)$. Using the bound \eqref{moreconvenientbound}, we have for any $u,v\in H^{\frac{s}{2}}$,
\begin{align*}
|\langle [L,\varphi_r] u,v\rangle| &=\Big|\int_\R \int_\R \check{v}(\xi)\widehat{\vr}(t)\hat{u}(\xi-t)\big( m(\x) - m(\x-t)\big) dtd\xi \Big|\\
&\lesssim \int_\R |\widehat{\vr}(t)|\omega(t)\int_\R \cop{\xi}^{\frac{s}{2}}|\check{v}(\xi)|\cop{\xi-t}^{\frac{s}{2}}|\hat{u}(\xi-t)|d\xi dt\\
&\lesssim \underbrace{\int_\R|\hat{\varphi}(t)|\omega(t/r)dt}_{\let\scriptstyle\textstyle
    \substack{\gtrsim \norm{B_r}{op}}}\horm{u}\horm{v}.
\end{align*}
As $\omega$ is bounded above by a polynomial and $\lim_{t\to0}\omega(t)=0$, the statement of the lemma follows.
\end{proof}
We are now ready to prove that a dichotomized minimizing sequence can be `split' in two as described at the beginning of the section.
\begin{proposition}\label{spezial}
Suppose a minimizing sequence ${(u_k)\subset U_\m}$ dichotomizes, then there exist $0<\lambda<\mu$, and two sequences $(u_k^1)\subset U_\lambda$ and $(u_k^2)\subset U_{\mu-\lambda}$, so that
\begin{align*}
    \e(u_k^1) + \e(u_k^2)\to I_\m,\quad k\to\infty.
\end{align*}
\end{proposition}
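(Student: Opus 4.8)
The plan is to take the dichotomy data from Theorem~\ref{concentration} — a split level $\lambda\in(0,\mu)$, a translation sequence $(x_k)$ and radii $r_k\to\infty$, $\tilde r_k/r_k\to\infty$ — applied to the densities $\rho_k=u_k^2$, and to build the two pieces by a smooth cutoff. Fix a Schwartz (or smooth compactly supported) function $\varphi$ with $\varphi\equiv1$ on $[-1,1]$ and $\supp\varphi\subset[-2,2]$, and after translating $u_k$ so that $x_k=0$, set $\un=\varphi(\cdot/r_k)u_k$ and $\um=(1-\varphi(\cdot/R_k))u_k$ for a suitable intermediate scale $r_k\ll R_k\ll\tilde r_k$ (e.g. $R_k=\sqrt{r_k\tilde r_k}$). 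Then $\un$ is supported in $|x|\le2r_k$, $\um$ is supported in $|x|\ge R_k$, so they have disjoint supports, and the mass in the annulus $2r_k\le|x|\le R_k$ tends to $0$ by the dichotomy conditions. Hence $\q(\un)\to\lambda$ and $\q(\um)\to\mu-\lambda$; rescaling each by a scalar $c_k^i\to1$ puts the corrected functions exactly on $U_\lambda$ and $U_{\mu-\lambda}$, and by Corollary~\ref{continuityofe} (using $\horm{u_k}^2\simeq\mu$ from \eqref{hormbound}) this rescaling changes $\e$ by $o(1)$.

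The heart of the proof is the near-additivity $\e(\un)+\e(\um)=\e(u_k)+o(1)$. For the nonlinear part $\n$ this is immediate: since $N_p$ and $N_r$ are genuinely local and $\un,\um$ have disjoint supports whose union differs from $\R$ only by the annulus where $u_k\to0$ in every relevant norm (by \eqref{equalbounds} and the interpolation/Sobolev estimates already used for near minimizers), one gets $\n(\un)+\n(\um)=\n(u_k)+o(1)$. For the quadratic form $\l$ one writes, with $\psi_k=\varphi(\cdot/r_k)$ and $\chi_k=1-\varphi(\cdot/R_k)$ and noting $u_k=\psi_ku_k+(1-\psi_k)u_k$ etc.,
\begin{align*}
\l(u_k)-\l(\un)-\l(\um)=\inr{L\un}{\um}+\tfrac12\l\big(u_k-\un-\um\big)+\text{error terms},
\end{align*}
where the middle term is the $\l$-energy of the piece supported in the annulus, hence $o(1)$ by \eqref{hormbound}-type control on $u_k$ restricted there (one can insert another cutoff of the annulus and use boundedness of $T$ as in Prop.~\ref{congestion}). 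The genuinely non-local cross term $\inr{L\un}{\um}$ is where Lemma~\ref{zous} enters: since $\um=\chi_k\um$ and $\psi_k\um=0$,
\begin{align*}
\inr{L\un}{\um}=\inr{L(\psi_ku_k)}{\um}=\inr{\psi_kL u_k}{\um}+\inr{[L,\psi_k]u_k}{\um}=\inr{[L,\psi_k]u_k}{\um},
\end{align*}
and $\norm{[L,\psi_k]}{op}=\norm{B_{r_k}}{op}\to0$ by Lemma~\ref{zous}, while $\horm{u_k}$ and $\horm{\um}$ stay bounded by \eqref{hormbound}; hence this term is $o(1)$.

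Assembling: $\e(\un)+\e(\um)=\e(u_k)+o(1)\to I_\mu$, and after the scalar corrections the corrected sequences lie in $U_\lambda$, $U_{\mu-\lambda}$ and still satisfy $\e(u_k^1)+\e(u_k^2)\to I_\mu$. The main obstacle — and the step requiring the most care — is precisely the non-local cross term and more generally making every "annulus" and "commutator" error genuinely $o(1)$ uniformly: one must choose the intermediate scale $R_k$ so that simultaneously $R_k/r_k\to\infty$ (so the commutator $B_{r_k}$ and also $B_{R_k}$ die), $\tilde r_k/R_k\to\infty$ (so the annular mass and annular $\l$-energy die), and the boundedness of the localization operator $T$ from Prop.~\ref{congestion} is available at scale $r_k$ and $R_k$; the interaction $\inr{L((1-\psi_k)u_k)}{\um}$ must be handled the same way with the cutoff at the outer scale. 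Everything else — the convergences $\q(\un)\to\lambda$, the scalar corrections, the locality of $\n$ — is routine given the near-minimizer bounds of Prop.~\ref{complexbound} and Corollary~\ref{continuityofe}.
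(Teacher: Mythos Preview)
Your overall architecture is right --- localize by smooth cutoffs, kill the non-local interaction via the commutator estimate of Lemma~\ref{zous}, then rescale using Corollary~\ref{continuityofe} --- and your treatment of the cross term $\inr{L\un}{\um}$ is exactly the paper's mechanism. But the two-scale decomposition with a gap produces a residual piece $w_k=u_k-\un-\um$ whose $\l$-energy you have not actually shown to be $o(1)$, and this is a genuine gap.

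The paper avoids the issue by working at a \emph{single} scale and choosing the cutoff pair so that $\varphi_k^2+\psi_k^2\equiv1$. With $v_k^1=\varphi_ku_k$, $v_k^2=\psi_ku_k$ one obtains, up to commutators that are $o(1)$ by Lemma~\ref{zous},
\[
2\l(v_k^1)+2\l(v_k^2)=\inr{Lu_k}{(\varphi_k^2+\psi_k^2)u_k}=\inr{Lu_k}{u_k}=2\l(u_k),
\]
so the $\l$-splitting is \emph{exact} modulo $o(1)$ and no annular remainder ever appears. In your scheme, by contrast, expanding $\l(u_k)=\l(\un+\um+w_k)$ leaves $\l(w_k)$ together with the cross terms $\inr{L\un}{w_k}$, $\inr{L\um}{w_k}$. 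The justification you offer --- ``\eqref{hormbound}-type control on $u_k$ restricted there, boundedness of $T$'' --- does not deliver what is needed: dichotomy only gives $\|w_k\|_2\to0$, while every localization tool at hand (uniform boundedness of multiplication by the cutoffs on $H^{s/2}$, the operator $T$ of Proposition~\ref{congestion}) yields $\horm{w_k}\lesssim\horm{u_k}$, a \emph{bound}, not smallness. Since $L$ only maps $H^{s/2}\to H^{-s/2}$ and no higher regularity on $u_k$ is available, you cannot convert $L^2$-smallness of $w_k$ into smallness of $\l(w_k)$ or of $\inr{Lu_k}{\theta_k^2u_k}$; running the commutator on these terms is circular.

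One can salvage an inequality from your decomposition: using $\l(w_k)\ge0$ and handling the remaining cross terms by the same commutator-plus-sign trick gives $\limsup\big(\e(\un)+\e(\um)\big)\le I_\mu$, which already suffices to contradict strict subadditivity. But it does not yield the convergence $\e(\un)+\e(\um)\to I_\mu$ asserted in the proposition. The quadratic partition $\varphi^2+\psi^2=1$ is precisely the device that upgrades the inequality to an equality.
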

\begin{proof} 
By the Concentration-Compactness principle, we can pick $(r_k)\subset\R^+$ with $r_k \to \infty$, and $(x_k)\subset\R$ so that
\begin{align}\label{splitting}
    \int_{X}|u_k(x-x_k)|^2dx\to\begin{cases} \lambda, &X=\{x: |x|\leq r_k\},\\
    0, &X=\{x:r_k\leq|x|\leq 2r_k\},\\
    \mu-\lambda,  &X=\{x:2r_k\leq|x|\},
    \end{cases}
\end{align}
as $k\to\infty$; without loss of generality, we assume  $x_k=0$ for all $k$. Next, we pick two smooth symmetrical functions $\varphi,\psi\colon \R\to[0,1]$, satisfying 
\(\varphi(x) = 1\) when $|x|\leq 1$, $\varphi = 0$ when \(|x|\geq 2\) and $\varphi^2+\psi^2=1$. We denote $\varphi_k$ and $\psi_k$ for $\varphi(\cdot/r_k)$ and $\psi(\cdot/r_k)$, and set \(v_k^1=\varphi_k u_k\) and \(v_k^2=\psi_k u_k\). By \eqref{splitting}, these function automatically satisfies 
\begin{align*}
    \q(v_k^1)\to\lambda,\quad
    \q(v_k^2)\to\mu-\lambda,\qquad k\to\infty.
\end{align*}
It is easily verified that if $\phi$ is Schwartz and symmetric, then $\cop{v,\phi u}=\cop{\phi v, u}$ for any $v\in H^{\frac{-s}{2}}$ and $u\in H^{\frac{s}{2}}$, and so we may write
\begin{align*}
    \l(\vun)- \langle Lu_k,\vn^2 u_k\rangle &= \langle [L,\vn]u_k,\vn u_k\rangle,\\
    \l(\vum)- \langle Lu_k,\pn^2 u_k\rangle &=  \langle [L,(1-\pn)]u_k,(1-\pn) u_k\rangle.
\end{align*}
By Lemma \ref{zous}, the RHS of these equations tend to zero, provided we can uniformly bound the $H^{\frac{s}{2}}$-norm of $u_k$, $\varphi_k u_k$ and $(1-\psi_k)u_k$ in $k$. By \eqref{hormbound}, this again is guaranteed if multiplication by $\varphi_k$ and $(1-\varphi_k)$ are uniformly bounded (in $k$) as operators on $H^{\frac{s}{2}}$. This is indeed true and follows by similar reasoning as in the proof of Prop. \ref{congestion}; it is trivially proven when $s/2\in\mathbb{N}_0$, and the result for general $s>0$ follows from interpolation. Thus $\l(\vun)+\l(\vum)-\l(u_k)\to0$, as $k\to\infty$.
Turning to $\n$, we have
\begin{align*}
    \n(\vun) + \n(\vum) - \n(u)&= \int_{r_k<|x|<2r_k}N(\vun)+N(\vum)-N(u_k)dx.
\end{align*}
By Prop. \ref{basicbounds}, we have $|N(x)|\lesssim x^2$, and so \eqref{splitting} guarantees the RHS of this equation to tend to zero as $k\to\infty$. As $(u_k)$ is a minimizing sequence, we conclude that
\begin{align*}\
    \e(\vun)+\e(\vum) \to I_\mu,
\end{align*}
for $k\to\infty$. By the same reasoning as before, the $H^{\frac{s}{2}}$-norm of $v^1_k$ and $v^2_k$ is uniformly bounded in $k$, and so by Corollary \ref{continuityofe} the proposition is proved for the two sequences $\un=\vun\sqrt{\lambda/\q(\vun)}$ and $\um=\vum\sqrt{(\mu-\lambda)/\q(\vum)}$.
\end{proof}

With these two results at hand, we can exclude dichotomy; picking $\mu_*>0$ so that $\mu\mapsto I_\mu$ is strictly subadditive and assuming $(u_k),(u_k^1)$ and $(u_k^2)$ to be as in the previous proposition, we arrive at the contradiction
\begin{align*}
    I_\mu=\lim_{k\to\infty}\e(u_k^1) + \e(u_k^2)\geq\liminf_{k\to\infty}\e(u_k^1) + \liminf_{k\to\infty}\e(u_k^2)\geq I_{\lambda} + I_{\mu-\lambda}.
\end{align*}
\begin{corollary}
Provided $\mu_*>0$ is sufficiently small, no minimizing sequence of \eqref{problem} has a subsequence for which dichotomy occurs in accordance with Theorem \ref{concentration}.
\end{corollary}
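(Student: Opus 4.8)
The plan is to argue by contradiction, and for this corollary almost all of the work has already been done: the two inputs are Proposition~\ref{spezial} and Proposition~\ref{subadditive}. First I would fix $\mu_*>0$ small enough that $\m\mapsto I_\m$ is strictly subadditive on $(0,\mu_*)$, as granted by Proposition~\ref{subadditive}; this is the most restrictive smallness requirement encountered so far, and since all earlier constraints only shrink $\mu_*$ (and, by the remark following Proposition~\ref{sizeImu}, do not affect $\k$ or the implicit constants), there is no conflict in taking the minimum of the finitely many thresholds.

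Next, suppose toward a contradiction that for some $\m\in(0,\mu_*)$ a minimizing sequence of \eqref{problem} has a subsequence along which dichotomy occurs in the sense of Theorem~\ref{concentration}. That subsequence is itself a minimizing sequence in $U_\m$, so Proposition~\ref{spezial} supplies a number $\lambda\in(0,\m)$ and sequences $(u_k^1)\subset U_\lambda$, $(u_k^2)\subset U_{\m-\lambda}$ with $\e(u_k^1)+\e(u_k^2)\to I_\m$. From the definition of the infimum, $I_\lambda\le\e(u_k^1)$ and $I_{\m-\lambda}\le\e(u_k^2)$ for every $k$, both infima being finite by Proposition~\ref{sizeImu}; hence, using superadditivity of $\liminf$,
\begin{align*}
    I_\m = \lim_{k\to\infty}\big(\e(u_k^1)+\e(u_k^2)\big) \ge \liminf_{k\to\infty}\e(u_k^1) + \liminf_{k\to\infty}\e(u_k^2) \ge I_\lambda + I_{\m-\lambda}.
\end{align*}
Since $\lambda$ and $\m-\lambda$ lie in $(0,\mu_*)$, strict subadditivity gives $I_\m<I_\lambda+I_{\m-\lambda}$, contradicting the display above. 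Therefore no minimizing sequence admits a dichotomizing subsequence, and shrinking $\mu_*$ further if necessary preserves this conclusion.

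I do not expect any real obstacle in this final step; the genuine difficulties are upstream, in Proposition~\ref{spezial} (splitting a dichotomizing sequence into two pieces whose energies are asymptotically additive, which relies on the near-locality of $L$ encoded in Lemma~\ref{zous} and on the uniform $H^{\frac{s}{2}}$-bounds for the cut-off pieces) and in Proposition~\ref{subadditive} (the strict subadditivity, which itself comes from the strict subhomogeneity forced by the super-quadratic leading nonlinearity together with the negligibility of the remainder term $\n_r$). The only point demanding a moment's care in assembling the corollary is to ensure the single constant $\mu_*$ is chosen small enough for \emph{all} of the preceding propositions at once — which is harmless, as noted above.
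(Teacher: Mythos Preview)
Your argument is correct and matches the paper's own proof essentially line for line: fix $\mu_*$ so that Proposition~\ref{subadditive} gives strict subadditivity, apply Proposition~\ref{spezial} to a hypothetical dichotomizing minimizing sequence, and derive the contradiction $I_\mu \ge I_\lambda + I_{\mu-\lambda}$ via the same chain of inequalities. The extra care you take in noting that the subsequence is itself minimizing and that the infima are finite is fine but not strictly needed.
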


\section{Solutions from concentrated minimizing sequences}\label{convconssection}
Theorem \ref{concentration} provided us with the three possible phenomena that could occur for a minimizing sequence of \eqref{problem}; the previous two sections excluded vanishing and dichotomy, and so it remains to see that we can construct a minimizer from a \textit{concentrating} minimizing sequence. This is straight forward:

\begin{proposition}
Provided $\mu_*>0$ is sufficiently small, any minimizing sequence $(u_k)\subset U_\mu$ of \eqref{problem} admits a subsequence converging in $L^2$-norm to a minimizer $u\in U_\mu$.
\end{proposition}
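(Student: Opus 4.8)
The plan is to run the standard concentration--compactness endgame: given a minimizing sequence $(u_k) \subset U_\mu$, the associated sequence of densities $\rho_k = u_k^2 \in L^1$ has mass $\int_\R \rho_k\,dx = 2\mu$, so Theorem~\ref{concentration} applies. By the corollaries of the previous two sections, after passing to a subsequence neither vanishing nor dichotomy can occur (for $\mu_* > 0$ chosen small enough so that Prop.~\ref{subadditive} holds), hence concentration occurs: there is a sequence $(x_k) \subset \R$ such that for every $\varepsilon > 0$ there is $r = r(\varepsilon) < \infty$ with $\int_{-r}^r u_k(x - x_k)^2\,dx \geq 2\mu - \varepsilon$ for all $k$. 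Translating, set $\tilde u_k(x) = u_k(x + x_k)$; since $\e$, $\q$ and $\horm{\cdot}$ are translation invariant, $(\tilde u_k)$ is still a minimizing sequence in $U_\mu$ with the same concentration property centred at the origin.

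Next I would extract a weak limit. By \eqref{hormbound} the sequence $(\tilde u_k)$ is bounded in $H^{\frac{s}{2}}$, so after a further subsequence $\tilde u_k \rightharpoonup u$ weakly in $H^{\frac{s}{2}}$, and by Rellich--Kondrachov (local compactness of $H^{\frac{s}{2}} \hookrightarrow L^2_{loc}$, valid since $s > 0$) we get $\tilde u_k \to u$ strongly in $L^2(-r,r)$ for every fixed $r$. The concentration bound then upgrades this to strong convergence in $L^2(\R)$: given $\varepsilon > 0$, pick $r = r(\varepsilon)$ from concentration; then $\norm{\tilde u_k}{L^2(|x| > r)}^2 \leq \varepsilon$ for all $k$, and also $\norm{u}{L^2(|x| > r)}^2 \leq \varepsilon$ by weak lower semicontinuity applied on $|x| > r$ (or by Fatou), so $\limsup_k \norm{\tilde u_k - u}{2}^2 \lesssim \varepsilon$, and letting $\varepsilon \to 0$ gives $\tilde u_k \to u$ in $L^2(\R)$. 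In particular $\q(u) = \lim_k \q(\tilde u_k) = \mu$, so $u \in U_\mu$ and $u \neq 0$.

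It remains to check $\e(u) = I_\mu$, i.e. that $u$ is a genuine minimizer. I would argue $\e(u) \leq \liminf_k \e(\tilde u_k) = I_\mu$, which combined with $u \in U_\mu$ (hence $\e(u) \geq I_\mu$) forces equality. For the nonlinear part, $\n$ is continuous along $L^2$-convergent sequences: by the estimate $|n(x) - n(y)| \lesssim |x - y|$ from (C\textsubscript{1}) one gets $|\n(\tilde u_k) - \n(u)| \lesssim \norm{\tilde u_k - u}{2}(\norm{\tilde u_k}{2} + \norm{u}{2}) \to 0$, so $\n(\tilde u_k) \to \n(u)$. For the dispersive part, $\l$ is convex and (as shown in Prop.~\ref{basicbounds}(i) and its proof) continuous on $H^{\frac{s}{2}}$, hence weakly lower semicontinuous, so $\l(u) \leq \liminf_k \l(\tilde u_k)$. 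Combining, $\e(u) = \l(u) - \n(u) \leq \liminf_k \l(\tilde u_k) - \lim_k \n(\tilde u_k) \leq \liminf_k \e(\tilde u_k) = I_\mu$, as desired.

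The only subtle point — the step I would flag as the main obstacle — is the upgrade from local $L^2$-compactness to global $L^2$-convergence, specifically ensuring the tail $\norm{u}{L^2(|x|>r)}$ of the limit is controlled by the same $\varepsilon$. This is where the precise form of the concentration alternative (a single $r$ working uniformly in $k$, once $\varepsilon$ is fixed) is essential; without the uniformity one could not close the estimate. Everything else is soft: weak lower semicontinuity of $\l$, Lipschitz continuity of $\n$ in $L^2$, and translation invariance of the functionals. Finally, once $u \in U_\mu$ is a minimizer, the Lagrange multiplier argument sketched in Section~\ref{the method} yields a wave speed $\nu$ with $-\nu u + Lu - n(u) = 0$, and the regularity bootstrap (Prop.~\ref{regularityofsolutions}, cf. Prop.~\ref{furtherRegularity}) places $u$ in $H^{1+s}$, completing the proof of Theorem~\ref{main} in the reduced setting.
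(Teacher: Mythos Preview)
Your proof is correct and follows essentially the same architecture as the paper's: concentration occurs by exclusion, $L^2$-convergence yields $u\in U_\mu$, and lower semicontinuity of $\e$ forces $\e(u)=I_\mu$. The only differences are in the tools chosen for each sub-step---the paper obtains $L^2$-compactness directly via the Kolmogorov--Riesz--Fr\'echet theorem (bounded, tight, and equicontinuous under translation thanks to the uniform $H^{s/2}$-bound \eqref{hormbound}) rather than your weak-limit + Rellich + tightness upgrade, and it deduces $\l(u)\leq\liminf_k\l(u_k)$ from Fatou's lemma on $m|\hat u_k|^2$ rather than from abstract weak lower semicontinuity of the convex functional $\l$; both pairs of arguments are standard and interchangeable.
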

\begin{proof}
For $\mu_*$ sufficiently small, the two preceding sections guarantees that $(u_k)$ admits a subsequence, again denoted $(u_k)$, that concentrates in accordance with Theorem  \ref{concentration}. Without loss of generality, we assume $(u_k)$ to consist solely of near minimizers and shifted appropriately to concentrate about zero ($x_k=0$ for all $k$). By the Kolmogorov-Riesz-Fréchet compactness theorem, $(u_k)$ is relatively compact in $L^2$, as it is bounded, concentrated about zero and uniformly continuous with respect to translation:
\begin{align*}
    \norm{u_k(\cdot+y)-u_k(\cdot)}{2}&=\norm{(e^{-i(\cdot)y}-1)\hat{u}_k}{2}\\
    &\leq \norm{(e^{-i(\cdot)y}-1)\cop{\cdot}^{\frac{-s}{2}}}{\infty}\horm{u_k}\\
    &\to 0,
\end{align*}
uniformly in $k$ as $y\to0$, as guaranteed by \eqref{hormbound}. We conclude that $(u_k)$ admits a subsequence, yet again denoted $(u_k)$, so that $u_k\to u$, for some $u\in L^2$ with $\q(u)=\mu$. We now demonstrate that $u$ is a minimizer of \eqref{problem}. As the positive functions $m(\cdot)|\hat{u}_k|^2$ converges locally in measure to $m(\cdot)|\hat{u}|^2$, Fatou's lemma implies
\begin{align*}
    \l(u)\leq\liminf_{k\to\infty}\l(u_k).
\end{align*}
Using the Fréchet derivative (Prop. \ref{frechetderivative}) of $\n$, and that $|n(x)|\lesssim|x|$, we also obtain
\begin{align*}
    |\n(u)-\n(u_k)|&=\Big|\int_0^1\int_\R n(tu+(1-t)u_k)(u-u_k) dxdt\Big|\\
    &\lesssim\int_0^1\norm{tu+(1-t)u_k}{2}\norm{u-u_k}{2}dt\\
    &\to0,
\end{align*}
as $k\to\infty$. We now have $I_\mu\leq\e(u)\leq\liminf_{k\to\infty}\e(u_k)= I_\mu$.
\end{proof}

Not only is a  minimizer of \eqref{problem} a solutions of \eqref{original}, we are also provided some additional control over the respective velocity $\nu$, as described in the next proposition. 

\begin{proposition}\label{frommintosol}
Any minimizer $u\in U_\mu$ of the minimization problem \eqref{problem},  solves \eqref{original} in distribution sense, with velocity
  $\nu = \cop{\e'(u),u}/2\mu$.
Provided $\mu_*>0$ is small enough, we additionally have $-\nu\simeq\mu^\beta$.
\end{proposition}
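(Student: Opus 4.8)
The plan is to establish the two assertions in turn. For the first, I would avoid an abstract multiplier theorem and derive the Euler--Lagrange equation by a constrained variation staying inside the duality $H^{\frac{s}{2}}\times H^{\frac{-s}{2}}$. Fix $v\in H^{\frac{s}{2}}$ and put $\gamma(t)=\sqrt{\mu/\q(u+tv)}\,(u+tv)$; since $t\mapsto\q(u+tv)$ is a polynomial taking the positive value $\q(u)=\mu$ at $t=0$, the curve $\gamma$ is well defined, lies in $U_\mu$ for all small $|t|$, and satisfies $\gamma(0)=u$, $\gamma'(0)=v-\tfrac{\inr{u}{v}}{2\mu}u$. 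Because $u$ minimizes $\e$ over $U_\mu$ and $\e$ is Fréchet differentiable (Prop.~\ref{frechetderivative}), $t\mapsto\e(\gamma(t))$ has a minimum at $t=0$, so $0=\inr{\e'(u)}{\gamma'(0)}=\inr{\e'(u)}{v}-\tfrac{\inr{u}{v}}{2\mu}\inr{\e'(u)}{u}$ for every $v$. Hence $\e'(u)=\nu u$ in $H^{\frac{-s}{2}}$ with $\nu=\inr{\e'(u)}{u}/2\mu$, and as $\e'(u)=Lu-n(u)$ this is precisely $-\nu u+Lu-n(u)=0$, valid in $H^{\frac{-s}{2}}$ and hence in $\mathcal{D}'$.

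For the size of $\nu$, I would expand $\inr{\e'(u)}{u}=\inr{Lu}{u}-\inr{n(u)}{u}=2\l(u)-\inr{n_p(u)}{u}-\inr{n_r(u)}{u}$. Homogeneity of $n_p$ gives the pointwise identity $y\,n_p(y)=(2+p)N_p(y)$ (from $N_p(y)=y\int_0^1 n_p(\tau y)\,d\tau=\tfrac{1}{2+p}y\,n_p(y)$, using $n_p(\tau y)=\tau^{1+p}n_p(y)$ for $\tau\geq0$ in both cases (A1), (A2)), so $\inr{n_p(u)}{u}=(2+p)\n_p(u)$. The remainder pairing is harmless on near minimizers: $x\mapsto x\,n_r(x)$ satisfies $|x\,n_r(x)|\lesssim\min\{|x|^{2+r},|x|^{2+p}\}$, the same two-sided bound that underlies $(iv)$ of Prop.~\ref{basicbounds}, so the argument proving \eqref{rapiddecay} applies verbatim to yield $\inr{n_r(u)}{u}=o(\mu^{1+\beta})$. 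Now $\e(u)=I_\mu<-\kappa\mu^{1+\beta}$ by Prop.~\ref{sizeImu}, so the minimizer $u$ is a near minimizer and Prop.~\ref{complexbound} gives $\l(u)\simeq\n(u)\simeq\mu^{1+\beta}$ with $\n_r(u)=o(\mu^{1+\beta})$, hence also $\n_p(u)=\n(u)-\n_r(u)\simeq\mu^{1+\beta}$. Substituting $\l(u)=\e(u)+\n(u)=I_\mu+\n_p(u)+o(\mu^{1+\beta})$ to eliminate $\l(u)$ leaves
\[
\inr{\e'(u)}{u}=2I_\mu-p\,\n_p(u)+o(\mu^{1+\beta}).
\]
The right-hand side has absolute value $\lesssim\mu^{1+\beta}$ (as $|I_\mu|\lesssim\mu^{1+\beta}$, itself a consequence of $\l(u),\n(u)\simeq\mu^{1+\beta}$), and it is $<-2\kappa\mu^{1+\beta}$ once $\mu$ is small enough that the $o(\mu^{1+\beta})$ term is dominated by $p\,\n_p(u)\gtrsim\mu^{1+\beta}$ — here we use $I_\mu<-\kappa\mu^{1+\beta}$. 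Thus $\inr{\e'(u)}{u}\simeq-\mu^{1+\beta}$, and dividing by $2\mu$ gives $-\nu\simeq\mu^{\beta}$; shrinking $\mu_*$ if needed secures all the smallness conditions on $(0,\mu_*)$.

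The hard part will be the sign of the numerator. Expanded naively as $2\l(u)-(2+p)\n_p(u)+o(\mu^{1+\beta})$, it is a difference of two positive quantities both of exact order $\mu^{1+\beta}$, so nothing in that form makes it negative, or even of order $\mu^{1+\beta}$. The remedy is to reinject the energy inequality $\e(u)=I_\mu<-\kappa\mu^{1+\beta}<0$, which forces $\l(u)$ below $\n_p(u)$; eliminating $\l(u)$ then exposes the unambiguously negative, correctly sized term $2I_\mu-p\,\n_p(u)$. A lesser, routine point is carrying out the Lagrange-multiplier step directly in $H^{\frac{-s}{2}}$, for which the explicit curve $\gamma(t)$ does the work.
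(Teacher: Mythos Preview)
Your argument is correct and follows essentially the same route as the paper: the same homogeneity identity $y\,n_p(y)=(2+p)N_p(y)$, the same key rewriting $\inr{\e'(u)}{u}=2I_\mu-p\,\n(u)+o(\mu^{1+\beta})$ (you use $\n_p$ in place of $\n$, which is equivalent up to $o(\mu^{1+\beta})$), and the same inputs from Prop.~\ref{sizeImu} and Prop.~\ref{complexbound} to extract both the sign and the size of $\nu$. The only notable difference is that you derive the Euler--Lagrange equation by the explicit curve $\gamma(t)$ rather than invoking the Hilbert-submanifold Lagrange-multiplier rule; this is a clean elementary alternative.
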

\begin{proof}
As the feasible set $U_\mu$ is a Hilbert submanifold of $H^{\frac{s}{2}}$, it follows that there must be a Lagrange multiplier $\nu\in\R$ (depending on the minimizer $u$), so that
\begin{align}\label{solvesTheEquationInWeakSense}
    \e'(u)-\nu\q'(u)=0,
\end{align}
in $H^{-\frac{s}{2}}$. In particular, if we pair \eqref{solvesTheEquationInWeakSense} with $u$ and insert for $\q'$ we obtain
\begin{align*}
   \nu= \frac{\cop{\e'(u),u}}{2\mu},
\end{align*}
and so we attain the first part of the proposition. For the latter, note that 
\begin{align*}
    n(u)u=(2+p)N(u) + n_r(u)u-(2+p)N_r(u),
\end{align*}
and as argued in the proof of Prop. \ref{subadditive}, we have
\begin{align*}
    \int_\R n_r(u)u-(2+p)N_r(u)dx=o(\mu^{1+\beta}).
\end{align*}
Then
\begin{align*}
    \cop{\e'(u),u}&= \cop{Lu,u}-\cop{n(u),u}\\
    &=2\l(u)-(2+p)\n(u) + o(\mu^{1+\b})\\
    &=2I_\mu -p\n(u)+ o(\mu^{1+\b})\\
    &<-C\mu^{1+\beta}+o(\mu^{1+\b}),
\end{align*}
for some fixed $C>0$, by Prop. \ref{bounds} and \eqref{equalbounds}. Thus, for a sufficiently small $\mu_*>0$ we obtain $-\nu\gtrsim\mu^\beta$ when $\mu\in(0,\mu_*)$. The upper bound on $-\nu$ follows trivially from
\begin{align*}
    -\nu\lesssim\frac{1}{\mu}\Big(\l(u)+\norm{u}{2+p}^{2+p}\Big)\lesssim\mu^\beta,
\end{align*}
where we used $|n(x)x|\lesssim|x|^{2+p}$ and \eqref{equalbounds}.
\end{proof}

\section{Regularity of solutions}\label{regularitysection}
Before moving on, we summarize what has been proved so far. For the class of equations \eqref{original} that satisfies the assumptions (A) and (B) (see subsection \ref{assumptions}) and the `auxiliary' assumptions (C\textsubscript{1}) and (C\textsubscript{2}) (see subsection \ref{the method}), we have proved all parts of Theorem \ref{main}, except the estimate $\norm{u}{H^{1+s}}^2\lesssim\mu$. By Lemma \ref{equivalenceOfAssumptions}, when this estimate is proven, the theorem automatically holds in the case when only (A) and (B) are satisfied. Hence, we now introduce the final piece, concluding the proof of Theorem \ref{main}. 

\begin{proposition}\label{regularityofsolutions}
Provided $\mu_*>0$ is sufficiently small, minimizers $u\in U_\mu$ of \eqref{problem} satisfies
\begin{align*}
\norm{u}{H^{1+s}}^2 \lesssim\mu.
\end{align*}
\end{proposition}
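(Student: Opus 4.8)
The plan is to bootstrap regularity directly from the Euler--Lagrange equation
\[
(L-\nu)u = n(u),
\]
established in Prop.~\ref{frommintosol}, by exploiting the lower growth bound on $m$ from assumption~(B) together with the smallness estimates on near minimizers from Prop.~\ref{complexbound}. First I would observe that $m(\xi)-\nu\gtrsim\cop{\xi}^{s}$ for $|\xi|$ large: indeed $m(\xi)\simeq\cop{\xi}^{s}$ for $|\xi|>1$ by~(B) and $-\nu\simeq\mu^\beta>0$ by Prop.~\ref{frommintosol}, so the multiplier $(m(\xi)-\nu)^{-1}$ is bounded by $\cop{\xi}^{-s}$ away from the origin and is bounded near the origin as well (since $m-\nu \geq -\nu > 0$ everywhere by~(C\textsubscript{2}) and positivity of $m$). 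Hence the operator $(L-\nu)^{-1}$ maps $H^{\sigma}\to H^{\sigma+s}$ for every $\sigma$, and from the equation $u = (L-\nu)^{-1}n(u)$ we gain $s$ derivatives each time we can control $n(u)$ in a Sobolev space.

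The second step is the nonlinear estimate: since $n$ is globally Lipschitz (assumption~(C\textsubscript{1})) with $n(0)=0$ (which we may arrange, or simply absorb the constant), composition with $n$ maps $L^2\to L^2$ with $\norm{n(u)}{2}\lesssim\norm{u}{2}$, and more generally it maps $H^\sigma\cap L^\infty\to H^\sigma$ for $0\le\sigma\le 1$ by the standard Moser-type estimate $\norm{n(u)}{H^\sigma}\lesssim\norm{u}{H^\sigma}$ (using Lipschitz continuity; no higher regularity of $n$ is needed as long as $\sigma\le 1$). Starting from $u\in H^{s/2}$, the first application of $(L-\nu)^{-1}$ gives $u\in H^{s/2+s}$; iterating, after finitely many steps we reach $u\in H^\sigma$ for some $\sigma\ge 1$, hence $u\in L^\infty$ and $u\in H^1$. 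At that stage one more application, using $\norm{n(u)}{H^1}\lesssim\norm{u}{H^1}$, yields $u\in H^{1+s}$, which is the claimed regularity; note we must stop here since pushing past $H^{1+s}$ would require $n$ to be more than Lipschitz (or require the remainder structure of assumption~(A)), which is exactly why the theorem claims $H^{1+s}$ and not more.

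The crucial point is to track the \emph{constants} through this iteration so as to obtain the \emph{quantitative} bound $\norm{u}{H^{1+s}}^2\lesssim\mu$, uniformly in $\mu\in(0,\mu_*)$, rather than just qualitative membership. Here the key inputs are: $\norm{u}{H^{s/2}}^2\simeq\mu$ from~\eqref{hormbound}; the operator norm of $(L-\nu)^{-1}\colon H^\sigma\to H^{\sigma+s}$ is bounded \emph{independently of $\mu$} (since $-\nu\simeq\mu^\beta$ only helps, giving $(m-\nu)^{-1}\le (m)^{-1}$ on the bounded-frequency part where it matters, while on high frequencies the bound $\cop{\xi}^{-s}$ is $\mu$-independent); and the Lipschitz estimates on $n$ have $\mu$-independent constants. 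Thus each step of the iteration multiplies the relevant Sobolev norm by a fixed constant, and after a fixed finite number of steps (depending only on $s$) we land on $\norm{u}{H^{1+s}}\lesssim\norm{u}{H^{s/2}}\lesssim\sqrt{\mu}$, i.e.\ $\norm{u}{H^{1+s}}^2\lesssim\mu$.

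I expect the main obstacle to be the careful handling of the low-frequency behaviour of the resolvent $(L-\nu)^{-1}$: because $m(0)=0$ and $-\nu$ is small (of order $\mu^\beta$), the multiplier $(m(\xi)-\nu)^{-1}$ can be as large as $\mu^{-\beta}$ near $\xi=0$, so naively the resolvent bound degenerates as $\mu\to 0$. The resolution is that this large factor acts only on the \emph{low-frequency, bounded} part of $n(u)$, where one has the much better estimate $\norm{n(u)}{2}\lesssim\norm{u}{2}\simeq\sqrt\mu$ from Prop.~\ref{basicbounds}, and on that frequency range gaining $s$ derivatives costs nothing; so one should split $n(u)$ into low and high frequencies, estimate the low part in $L^2$ (or $H^\sigma$ for any $\sigma$, since it is frequency-localized) directly, and apply the $\cop{\xi}^{-s}$ bound only to the high part. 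Making this splitting compatible with the Moser estimates across the finitely many iteration steps, while keeping all constants $\mu$-independent, is the technical heart of the argument.
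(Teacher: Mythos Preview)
Your overall strategy---bootstrap regularity from the Euler--Lagrange equation using that $(L-\nu)^{-1}$ gains $s$ derivatives and that composition with a globally Lipschitz $n$ (with $n(0)=0$) preserves $H^\sigma$ for $0\le\sigma\le 1$---is exactly the paper's. The difference is in how the low-frequency degeneracy is handled.

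You first assert that the operator norm of $(L-\nu)^{-1}\colon H^\sigma\to H^{\sigma+s}$ is $\mu$-independent, writing that ``$(m-\nu)^{-1}\le m^{-1}$ on the bounded-frequency part where it matters''; this is not a useful bound, since $m(0)=0$ under~(C\textsubscript{2}) and hence $m^{-1}$ is unbounded near the origin. You then correctly recognize this as ``the main obstacle'' in your final paragraph and propose a low/high frequency splitting to compensate. That splitting can be made to work (low frequencies are trivially controlled in any $H^\sigma$ by $\|u\|_{L^2}$, and on high frequencies the resolvent bound is genuinely $\mu$-independent), so your argument is salvageable, but the exposition is internally inconsistent and the fix adds a layer of bookkeeping.

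The paper avoids all of this with a one-line trick: it adds $u$ to both sides, rewriting the equation as
\[
(L-\nu+1)u = n(u)+u \eqqcolon \eta(u).
\]
Now the relevant multiplier is $(m(\xi)-\nu+1)^{-1}$, and since $-\nu+1>\delta$ for some fixed $\delta>0$ once $\mu_*$ is small (Prop.~\ref{frommintosol} gives $-\nu\simeq\mu^\beta$, hence $-\nu+1\to 1$), the operator norm $\sup_\xi \cop{\xi}^s/(m(\xi)+\delta)$ is manifestly $\mu$-independent. The modified nonlinearity $\eta(x)=n(x)+x$ is still globally Lipschitz with $\eta(0)=0$, so the composition estimate $\|\eta(u)\|_{H^\sigma}\lesssim\|u\|_{H^\sigma}$ for $0\le\sigma\le1$ goes through unchanged, and the bootstrap runs cleanly with no splitting. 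Incidentally, the $L^\infty$ hypothesis you insert into the Moser-type estimate is unnecessary here: for Lipschitz $\eta$ with $\eta(0)=0$, the bound $\|\eta(u)\|_{H^\sigma}\le \mathrm{Lip}(\eta)\,\|u\|_{H^\sigma}$ holds for all $0\le\sigma\le1$ directly from the difference-quotient characterization of $H^\sigma$.
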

\begin{proof}
By Prop. \ref{frommintosol}, minimizers are solutions of \eqref{original}, and so by a little rewriting, we have  
\begin{align}\label{regularitystart}
     \underbrace{(L-\nu+1)u}_{\let\scriptstyle\textstyle\substack{\Lambda_{\nu} u}} =\underbrace{n(u)+u}_{\let\scriptstyle\textstyle\substack{\eta(u)}}.
\end{align}
Proposition \ref{frommintosol} also guarantees that ${-\nu+1>\delta}$ for a positive constant $\delta$ independent of $\mu\in(0,\mu_*)$, provided $\mu_*>0$ is small enough. The inverse of $\Lambda_{\nu}$ then defines a bounded linear Fourier multiplier, $\Lambda_{\nu}^{-1}\colon  H^{\alpha}\to H^{\alpha+s}$ for any $\alpha\in\R$, whose norm has the upper bound
\begin{align*}
    \norm{\Lambda_{\nu}^{-1}}{H^{\alpha}\to H^{\alpha+s}}=\sup_{\xi\in\R}\frac{\cop{\xi}^s}{m(\xi)-\nu+1}\leq\sup_{\xi\in\R}\frac{\cop{\xi}^s}{m(\xi)+\delta}\eqqcolon C.
\end{align*} Clearly $C$ is independent of $\mu\in(0,\mu_*)$. We also note that $T_\eta\colon u\mapsto\eta(u)$, is a bounded operator on $H^{\alpha}$, whenever $0\leq\alpha\leq1$, as $\eta$ is globally Lipschitz continuous with $\eta(0)=0$. Looking back at \eqref{regularitystart}, a minimizer $u\in U_{\mu}$ satisfies
\begin{align}\label{bootstrap}
    \norm{u}{H^{\alpha+s}}=\norm{\Lambda_{\nu}^{-1}\circ T_{\eta}(u)}{H^{\alpha+s}}\lesssim\norm{u}{H^{\alpha}},
\end{align}
whenever $0\leq\alpha\leq1$ (where the implicit constant in \eqref{bootstrap} can depend on $\alpha$). We now obtain the desired conclusion by the following `bootstrap' argument. Pick $k\in \N$ and  $0\leq r<s$ so that $1 +s = ks + r$. By a (finite) repeated use of \eqref{bootstrap}, we obtain
\begin{align*}
    \norm{u}{H^{1+s}}=\norm{u}{H^{ks+r}}\lesssim\norm{u}{H^{(k-1)s+r}}\lesssim\cdots\lesssim \norm{u}{H^{r}}\leq \norm{u}{H^s}\lesssim\norm{u}{L^2},
\end{align*}
and so we are done.
\end{proof}
\subsection{Further regularity}
We conclude this paper with a regularity result on the solutions we have constructed. Clearly, if equation \eqref{bootstrap} was satisfied for large $\alpha$, we could (as done in the previous proof) bootstrap to corresponding regularity. It is ultimately the regularity of $n$ that determines how large $\alpha$ can be in \eqref{bootstrap}. In \cite{BourdaudSickel}, the authors prove that for any $\gamma>3/2$, the composition operator $T_f:u\mapsto f(u)$ maps $H^{\gamma}$ to itself if, and only if, $f(0)=0$ and $f\in H^{\gamma}_{loc}$; in particular, if we restrict $\norm{u}{\infty}<R<\infty$, then we have 
\begin{align}\label{compositionoperatorbound}
    \norm{f(u)}{H^{\alpha}}\leq C\norm{u}{H^{\alpha}},
\end{align}
for some constant $C$ depending only on $f,R$ and $\alpha\in(\frac{3}{2},\gamma]$. Moreover, using the result of \cite{Moussai}, we can extend the inequality \eqref{compositionoperatorbound} to the case $\alpha\in[1,\gamma]$ (still with $\gamma>3/2$). It is now an easy task to improve the regularity of our solutions when $n\in H^{\alpha_*}_{loc}$ for some $\alpha_*>3/2$; note that functions in these spaces are necessarily locally Lipschitz continuous. We present the final proposition of this paper. 
\begin{proposition}\label{furtherRegularity}
If $n\in H^{\alpha_*}_{loc}$ with $\alpha_*>3/2$, then the solutions $u$ of \eqref{original} provided by Theorem \ref{main}, satisfies
\begin{align*}
     \norm{u}{H^{\alpha_*+s}}\lesssim\norm{u}{2}.
\end{align*}
\end{proposition}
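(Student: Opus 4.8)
The strategy mirrors the bootstrap argument in Proposition~\ref{regularityofsolutions}, but now fed by the stronger composition estimate \eqref{compositionoperatorbound} valid on $[1,\gamma]$ with $\gamma=\alpha_*>3/2$. First I would recall from Theorem~\ref{main} that the solution $u$ lies in $H^{1+s}\hookrightarrow L^\infty$ with $\norm{u}{H^{1+s}}^2\lesssim\norm{u}{2}^2=2\mu$, so in particular $\norm{u}{\infty}\leq R$ for some fixed finite $R$ once $\mu_*$ is small enough; this legitimizes the use of \eqref{compositionoperatorbound} with a constant $C=C(n,R,\alpha)$ that is independent of $\mu\in(0,\mu_*)$. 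Next, since $n\in H^{\alpha_*}_{loc}$ forces $n$ locally Lipschitz, Proposition~\ref{frommintosol} still applies and $u$ solves \eqref{regularitystart}, i.e. $\Lambda_\nu u=\eta(u)$ with $\eta(x)=n(x)+x$, and the Fourier-multiplier inverse $\Lambda_\nu^{-1}\colon H^\alpha\to H^{\alpha+s}$ remains bounded with norm $\leq C$ uniformly in $\mu$, exactly as established there. Finally, $\eta(0)=0$ and $\eta\in H^{\alpha_*}_{loc}$ (adding the linear term changes nothing), so by \eqref{compositionoperatorbound} extended to $[1,\alpha_*]$ the map $T_\eta\colon u\mapsto\eta(u)$ is bounded on $H^\alpha$ for every $\alpha\in[1,\alpha_*]$, with a bound depending only on $\alpha$, $n$ and $R$.

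Putting these together gives the key recursive inequality
\begin{align}\label{bootstrapFurther}
    \norm{u}{H^{\alpha+s}}=\norm{\Lambda_\nu^{-1}\circ T_\eta(u)}{H^{\alpha+s}}\lesssim\norm{u}{H^\alpha},
\end{align}
valid now for every $\alpha\in[1,\alpha_*]$ (with implicit constant allowed to depend on $\alpha$ but not on $\mu$). Starting from $u\in H^{1+s}$ and iterating \eqref{bootstrapFurther}, each application raises the Sobolev index by $s$; one continues as long as the current index stays in $[1,\alpha_*]$. Writing $\alpha_*=1+js+r'$ for the appropriate integer $j\geq0$ and $0\leq r'<s$ — or, more simply, applying \eqref{bootstrapFurther} finitely many times with a final step whose input index is chosen to land exactly at $\alpha_*$ — one chains the estimates
\begin{align*}
    \norm{u}{H^{\alpha_*+s}}\lesssim\norm{u}{H^{\alpha_*}}\lesssim\norm{u}{H^{\alpha_*-s}}\lesssim\cdots\lesssim\norm{u}{H^{1+s}}\lesssim\norm{u}{L^2},
\end{align*}
where the last step is Theorem~\ref{main}(i). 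This yields the claimed bound $\norm{u}{H^{\alpha_*+s}}\lesssim\norm{u}{2}$.

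The only genuinely delicate point is the very first one: making sure the constant in \eqref{compositionoperatorbound} can be taken uniform in $\mu$. This is handled by the $L^\infty$-bound $\norm{u}{\infty}\leq R$ coming from $\norm{u}{H^{1+s}}^2\lesssim\mu<\mu_*$ and $H^{1+s}\hookrightarrow L^\infty$, so that all solutions provided by Theorem~\ref{main} lie in a fixed ball of $L^\infty$ on which \eqref{compositionoperatorbound} applies with a single constant. Everything else is a routine finite induction on the Sobolev scale, identical in structure to the bootstrap in Proposition~\ref{regularityofsolutions}; no new analytic input beyond \eqref{compositionoperatorbound}, the boundedness of $\Lambda_\nu^{-1}$, and the embedding $H^{1+s}\hookrightarrow L^\infty$ is needed.
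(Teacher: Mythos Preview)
Your proof is correct and follows essentially the same route as the paper's: extend the bootstrap inequality \eqref{bootstrap} to the range $\alpha\in[1,\alpha_*]$ via the composition estimate \eqref{compositionoperatorbound} (using $\eta(0)=0$, $\eta\in H^{\alpha_*}_{loc}$, and the uniform $L^\infty$-bound from Theorem~\ref{main}), and then iterate finitely many times. The only cosmetic difference is that the paper phrases the extended range as $0\leq\alpha\leq\alpha_*$ (silently combining the Lipschitz bound for $\alpha\leq1$ with the new one for $\alpha\geq1$), whereas you start the chain directly at $H^{1+s}$ using Theorem~\ref{main}(i); both are equivalent.
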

\begin{proof}
Looking back at \eqref{bootstrap}, this equation is now valid for $0\leq\alpha\leq\alpha_*$. This follows from the previous discussion as: 1) $\eta\in H^{\alpha_*}_{loc}$ with $\eta(0)=0$, and 2) by Theorem \ref{main} we have a uniform upper bound on the $L^{\infty}$-norm of our solutions $u$ ($\mu_*$ is fixed). The result is then attained by a similar bootstrap argument as the one used in the proof of Prop. \ref{regularityofsolutions}. 
\end{proof}
\section{Acknowledgements}
The author would like to thank the referee for constructive feedback and Vincent Duchêne for helpful comments on an earlier version of this manuscript. 
\bibliographystyle{siam}
\bibliography{references}

\begin{thebibliography}{10}

\bibitem{MR1647189}
{\sc J.~P. Albert}, {\em Concentration compactness and the stability of
  solitary-wave solutions to nonlocal equations}, in Applied analysis ({B}aton
  {R}ouge, {LA}, 1996), vol.~221 of Contemp. Math., Amer. Math. Soc.,
  Providence, RI, 1999, pp.~1--29.

\bibitem{MR1455330}
{\sc J.~P. Albert, J.~L. Bona, and J.-C. Saut}, {\em Model equations for waves
  in stratified fluids}, Proc. Roy. Soc. London Ser. A, 453 (1997),
  pp.~1233--1260.

\bibitem{MR3485840}
{\sc M.~N. Arnesen}, {\em Existence of solitary-wave solutions to nonlocal
  equations}, Discrete Contin. Dyn. Syst., 36 (2016), pp.~3483--3510.

\bibitem{interpolation}
{\sc J.~Bergh and J.~L{\"o}fstr{\"o}m}, {\em Interpolation Spaces, An
  Introduction}, Grundlehren der mathematischen Wissenschaften (A Series of
  Comprehensive Studies in Mathematics), vol 223. Springer, Berlin, Heidelberg,
  1976.

\bibitem{BourdaudSickel}
{\sc G.~Bourdaud, M.~Moussai, and W.~Sickel}, {\em Composition operators acting
  on besov spaces on the real line}, Ann. Mat. Pura Appl., 193 (2014),
  pp.~1519--1554.

\bibitem{MR3060817}
{\sc H.~Chen and J.~L. Bona}, {\em Periodic traveling-wave solutions of
  nonlinear dispersive evolution equations}, Discrete Contin. Dyn. Syst., 33
  (2013), pp.~4841--4873.

\bibitem{MR2979975}
{\sc M.~Ehrnstr\"{o}m, M.~D. Groves, and E.~Wahl\'{e}n}, {\em On the existence
  and stability of solitary-wave solutions to a class of evolution equations of
  {W}hitham type}, Nonlinearity, 25 (2012), pp.~2903--2936.

\bibitem{EJMR18}
{\sc M.~Ehrnstr{\"o}m, M.~A. Johnson, O.~I.~H. Maehlen, and F.~Remonato}, {\em
  On the bifurcation diagram of the capillary--gravity whitham equation}, Water
  Waves, 1 (2019), pp.~275--313.

\bibitem{MR3070568}
{\sc R.~L. Frank and E.~Lenzmann}, {\em Uniqueness of non-linear ground states
  for fractional {L}aplacians in {$\Bbb{R}$}}, Acta Math., 210 (2013),
  pp.~261--318.

\bibitem{MR2847283}
{\sc M.~D. Groves and E.~Wahl{\'e}n}, {\em On the existence and conditional
  energetic stability of solitary gravity-capillary surface waves on deep
  water}, J. Math. Fluid Mech., 13 (2011), pp.~593--627.

\bibitem{doi:10.1111/sapm.12288}
{\sc M.~A. Johnson and J.~D. Wright}, {\em Generalized solitary waves in the
  gravity-capillary whitham equation}, Studies in Applied Mathematics, 144
  (2020), pp.~102--130.

\bibitem{MR3060183}
{\sc D.~Lannes}, {\em The water waves problem}, vol.~188 of Mathematical
  Surveys and Monographs, American Mathematical Society, Providence, RI, 2013.
\newblock Mathematical analysis and asymptotics.

\bibitem{MR3188389}
{\sc F.~Linares, D.~Pilod, and J.-C. Saut}, {\em Dispersive perturbations of
  {B}urgers and hyperbolic equations {I}: {L}ocal theory}, SIAM J. Math. Anal.,
  46 (2014), pp.~1505--1537.

\bibitem{MR3360393}
\leavevmode\vrule height 2pt depth -1.6pt width 23pt, {\em Remarks on the
  orbital stability of ground state solutions of f{K}d{V} and related
  equations}, Adv. Differential Equations, 20 (2015), pp.~835--858.

\bibitem{MR778970}
{\sc P.-L. Lions}, {\em The concentration-compactness principle in the calculus
  of variations. {T}he locally compact case. {I}}, Ann. Inst. H. Poincar\'e
  Anal. Non Lin\'eaire, 1 (1984), pp.~109--145.

\bibitem{Moussai}
{\sc M.~Moussai}, {\em Composition operators on {B}esov spaces in the limiting
  case {$s=1+1/p$}}, Studia Math., 241 (2018), pp.~1--15.

\bibitem{MR1702734}
{\sc S.~M. Sun}, {\em Non-existence of truly solitary waves in water with small
  surface tension}, R. Soc. Lond. Proc. Ser. A Math. Phys. Eng. Sci., 455
  (1999), pp.~2191--2228.

\bibitem{MR886343}
{\sc M.~I. Weinstein}, {\em Existence and dynamic stability of solitary wave
  solutions of equations arising in long wave propagation}, Comm. Partial
  Differential Equations, 12 (1987), pp.~1133--1173.

\bibitem{MR1954506}
{\sc L.~Zeng}, {\em Existence and stability of solitary-wave solutions of
  equations of {B}enjamin-{B}ona-{M}ahony type}, J. Differential Equations, 188
  (2003), pp.~1--32.

\end{thebibliography}
\end{document}